\newtheorem{prop}{{\bf Proposition}}
\newtheorem{lemma}{Lemma}
\newtheorem{thm}{Theorem}
\renewcommand\theequation{\thesection.\arabic{equation}}
\renewcommand{\theequation}{\arabic{section}.\arabic{equation}}
\title{Localized Pattern Formation and Oscillatory Instabilities in a Three-component Gierer–Meinhardt Model}
\author{Chunyi Gai \thanks{Department of Mathematics and Statistics,
    University of Northern British Columbia, Prince George, B.C.,
    Canada, V2N 4Z9 (corresponding author)},  Fahad Al Saadi \thanks{Department of Systems Engineering, Military Technological College, Muscat, Oman.}} \date{\today}
\numberwithin{equation}{section}
\begin{document}
\maketitle
 
\begin{abstract}
In this paper, we introduce a three-component Gierer-Meinhardt model in the semi-strong interaction regime, characterized by an asymptotically large diffusivity ratio. A key feature of this model is that the interior spike can undergo Hopf bifurcations in both amplitude and position, leading to rich oscillatory dynamics not present in classical two-component systems.  Using asymptotic analysis and numerical path-following, we construct localized spike equilibria and analyze spike nucleation that occurs through slow passage beyond a saddle-node bifurcation. Moreover, stability of spike equilibrium is analyzed by introducing time-scaling parameters, which reveal two distinct mechanisms: amplitude oscillations triggered by large-eigenvalue instabilities and oscillatory spike motion associated with small eigenvalues. Numerical simulations illustrate these dynamics and their transition regimes. This dual mechanism highlights richer spike behavior in three-component systems and suggests several open problems for future study.

\end{abstract}

\section{Introduction}\label{sec1}

Reaction–diffusion (RD) systems have been used as a fundamental framework to model pattern formation in biological, chemical, and physical systems \cite{turing}. A particularly influential example is the Gierer-Meinhardt (GM) model, which is originally proposed to describe the morphogen interactions underlying biological pattern formation \cite{gierer1972theory}.  The classical GM model involves two interacting chemical species: a short-range self-activator and a long-range inhibitor. Through the mechanism of local self-activation and lateral inhibition, this system captures the emergence of stationary spatial structures such as spots, stripes, and labyrinthine patterns \cite{meinhardt2008models,murray2003spatial}.

Over the past decades, the two-component GM model has been extensively analyzed both analytically and numerically. A focus of these work has been on identifying the conditions for Turing instability, the bifurcation structure of patterned solutions \cite{wei2013mathematical}, and the construction of spatially localized patterns with their stability properties \cite{iron2001stability,ward2002existence,wei2007existence}. These studies have provided detailed asymptotic frameworks for understanding how spike equilibria form, interact, and evolve in one- and two-dimensional domains \cite{ kolokolnikov2006stability, kolokolnikov2009existence, ward2003hopf}, and have provided a cornerstone for understanding the formation of self-organized biological patterns.

Despite its success, the two-component formulation can be restrictive for certain biological or chemical systems, where additional species often play important roles as secondary inhibitors or facilitators. 
To capture such complexity, extensions have been proposed to three or more-component GM-type systems \cite{satnoianu2000turing,piskovsky2025turing,wei2008mutually}. The inclusion of an additional component can fundamentally alter dynamical behavior, leading to richer bifurcation structures, oscillatory instabilities, and novel patterns formation regimes that lie beyond the scope of the classical two-component model \cite{xie2021complex, al2024localized, xie2024oscillatory}.

Motivated by understanding how extra components influence the existence and stability of localized structures, in this paper, we analyze a three-component GM model, which is given as follows:

\begin{subequations}\label{Gm}
\begin{alignat}{2}
\frac{\partial u}{\partial t} & = a-u+\frac{u^3}{w v}+\delta_1 \frac{\partial^2 u}{\partial x^2}, \,\,  x \in (-l,l), t>0, \label{Gm1}\\
%
\theta\frac{\partial v}{\partial t} & = u^2-b v+D_v\frac{\partial^2 v}{\partial x^2},\,\,  x\in (-l,l), t>0, \label{Gm2}\\
%
\tau\frac{\partial w}{\partial t} & = u -cw +\delta_2 \frac{\partial^2 w}{\partial x^2},  \,\,  x \in (-l,l), t>0,  \label{Gm3}\\
\intertext{with Neumann  boundary conditions}
	& \left. u_x\right|_{x=\pm l} = \left. v_x\right|_{x=\pm l} = 
	  \left. w_x\right|_{x=\pm l} =0.  \nonumber
\end{alignat}
\end{subequations}
This model contains an additional reactant $w$
that acts as an inhibitor of $u$ and interacts with $v$ indirectly through its dependence on the activator $u$. The parameters $a, b, c > 0$, and we focus on the semi-strong regime where $\delta_1\ll 1, \delta_2=\delta_1^2$ and $D_v=\mathcal{O}(1)$. In this regime, the activator $u$ diffuses more slowly than $v$, and $w$ exhibits essentially the same behavior as $u$. To account for the possibility that different species evolve on distinct intrinsic timescales, we introduce the time-scaling parameters $\theta$ and $\tau$.  Our goal is to investigate the new dynamics introduced by the extra component $w$. Moreover, we assume a nontrivial background $a>0$ for the activator, which, as we will show below, is essential to generate new spikes.

\subsection{Main Results}

For the three-component GM model (\ref{Gm}), we construct localized spike equilibria in the limit $\delta_1 \to 0$ using asymptotic analysis and numerical-path following methods. In the presence of the nontrivial activator background $a>0$, we show that spike nucleation (or
insertion) can occur as the inhibitor diffusivity $D_v$ decreases, whereby a new spike is generated either at the domain boundaries or from the quiescent background between adjacent spikes.  This process is illustrated in Figure \ref{fig:GM_flexpde_plot}, where initially a interior spike is located at the domain center, then decreasing $D_v$ first triggers the generation of two boundary spikes. With further decrease, new spikes nucleate in between the interior spike and each boundary spike.

\begin{figure}[htbp]
    \centering
    \includegraphics[width=0.8\textwidth]{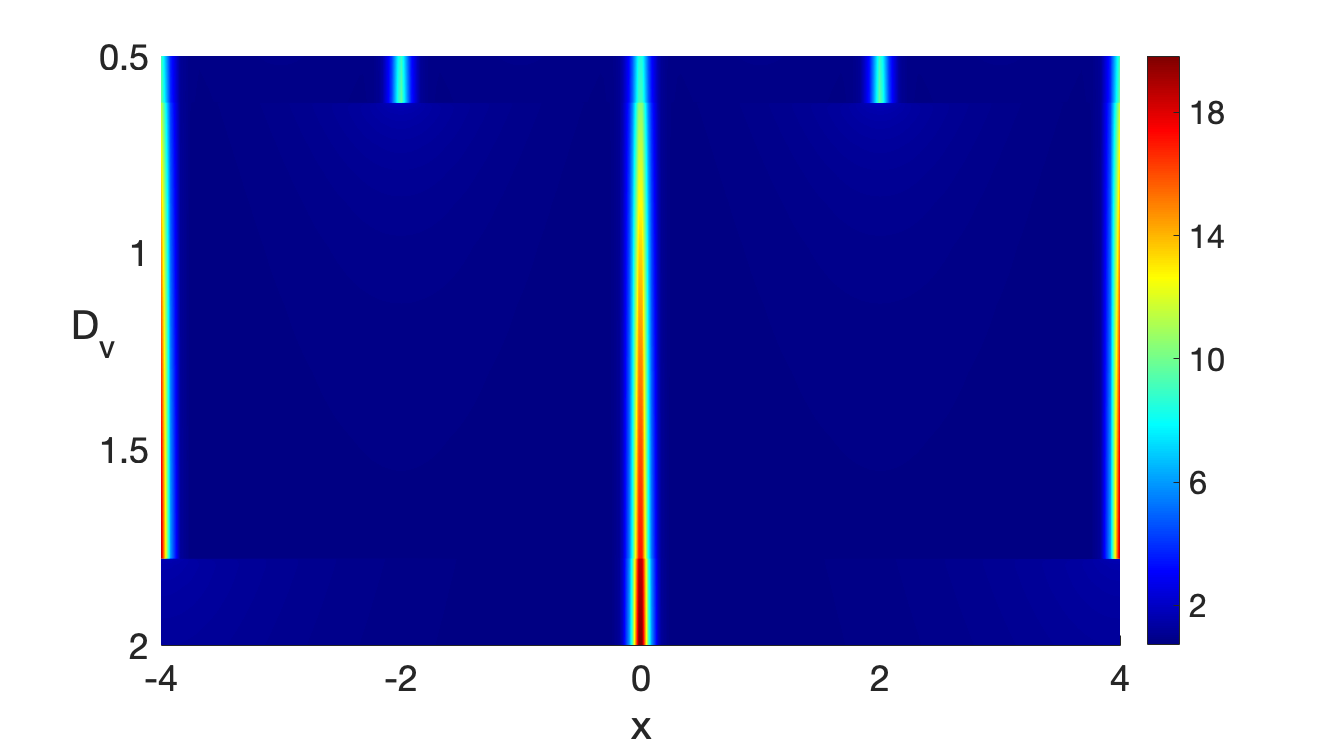}
    \caption{Time-dependent PDE simulations of \eqref{Gm} using \textit{FlexPDE} \cite{flexpde2015} illustrating spike nucleation behavior as the domain half-length $L$ slowly increases as $D_v = 2-1.5*10^{-4}t$. The transitions where boundary spikes first emerge and then later when spikes are nucleated between the interior and boundary spikes. Parameters: $\delta_1 = 0.03^2, \delta_2=\delta_1^2, \theta = \tau =0$, $a =0.7, b=1, c=1$ and $l=4$.}
    \label{fig:GM_flexpde_plot}%
\end{figure}

By relaxing the two time-scaling parameters $\theta$ and $\tau$ in front of the equations of $v$ and $w$, respectively, we investigate the stability of the interior spike equilibria.  In particular,  two classes of eigenvalues are considered. The large eigenvalues that correspond to the structural stability of the pattern are studied by deriving a novel nonlocal eigenvalue problem (NLEP).  The other type are the small eigenvalues that are associated with slow spike motion dynamics. For this instability, we derive asymptotic instability thresholds explicitly, which mark the onset of oscillatory instabilities in the spike motion.

Figure \ref{fig:leig_plot} and \ref{fig:seig_plot} reveal the two qualitatively distinct dynamical behaviors. As shown in Figure \ref{fig:leig_plot}, as $\theta$ increases beyond a critical value, the spike undergoes oscillations in amplitude, leading to instability of the localized structure.  In contrast, in Figure \ref{fig:seig_plot}, by increasing $\tau$ and setting $\theta=0$, the spike experiences an oscillatory instability in position, resulting in periodic motion of the spike across the domain. The oscillatory behavior decays as $\tau$ increases, and and for sufficiently large $\tau,$ the spike ultimately drifts to the boundary and remains there, as shown in Figure \ref{fig:seig_plot} (b).

\begin{figure}
\begin{center}
\includegraphics[width=0.48\textwidth]{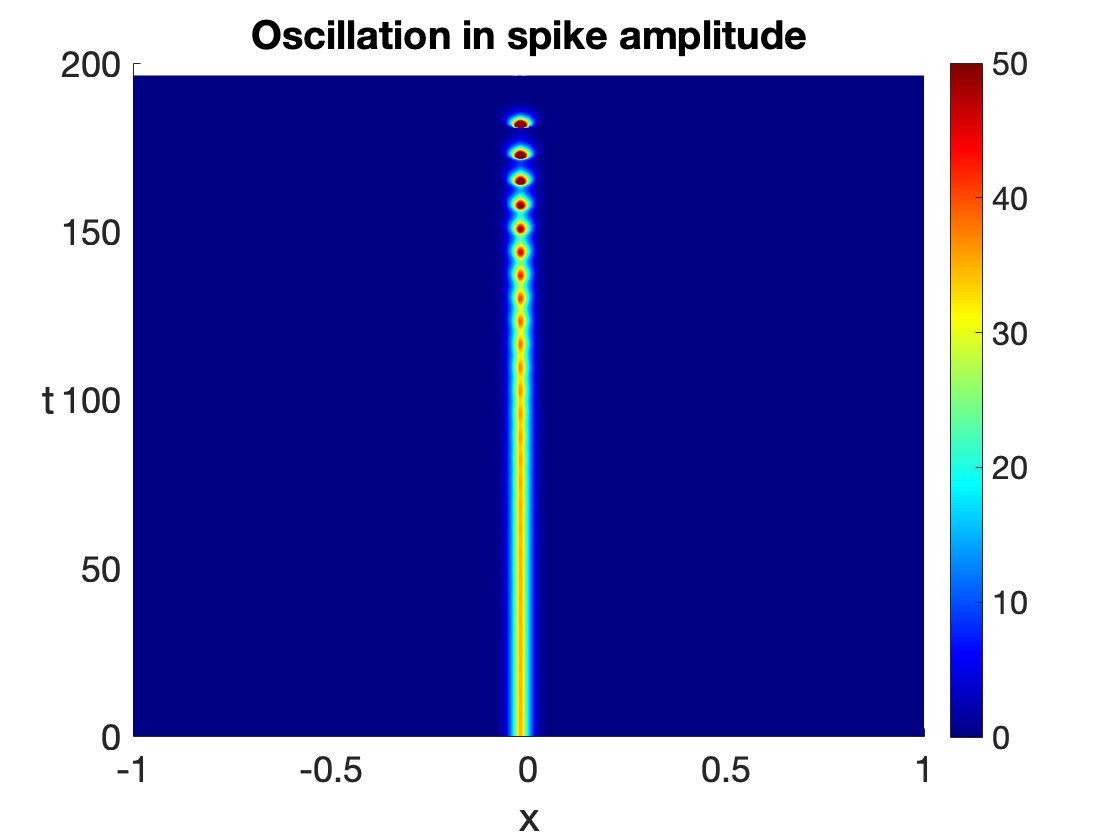} \quad
\includegraphics[width=0.48\textwidth]{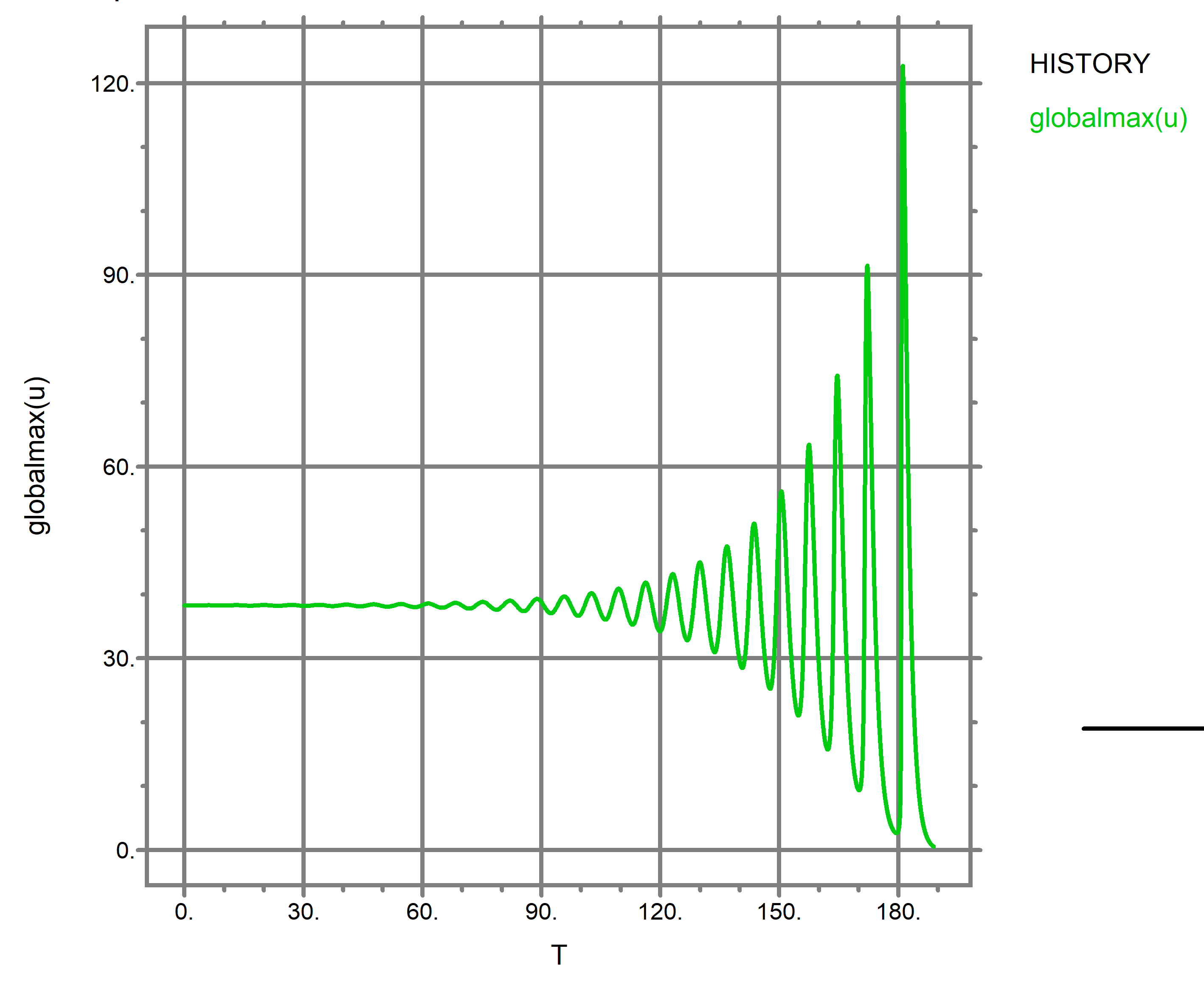} \newline
(a) ~~~~~~~~~~~~~~~~~~~~~~~~~~~~~~~~~~~~~~~~~~~~~~~~~~~~~~~~~~~~ (b)
\caption{Full simulations by {Flexpde} \cite{flexpde2015} illustrating oscillatory behavior in spike amplitudes and spike motion. 
(a) Oscillations in spike amplitude triggered by a large-eigenvalue instability as $\theta=1.5$, which is above the critical value  $\theta_h= 1.34$, the oscillations eventually destroy the spike structure.
(b) The amplitude of $u(x)$ versus time for the interior spike centered at $x = 0$.  Other parameters are: $\delta_1=0.01^2, a=0.01, b=1, c=1, l=1, D_v=1, \tau=0.$}
\label{fig:leig_plot}
\end{center}
\end{figure}

\begin{figure}
\begin{center}
\includegraphics[width=0.48\textwidth]{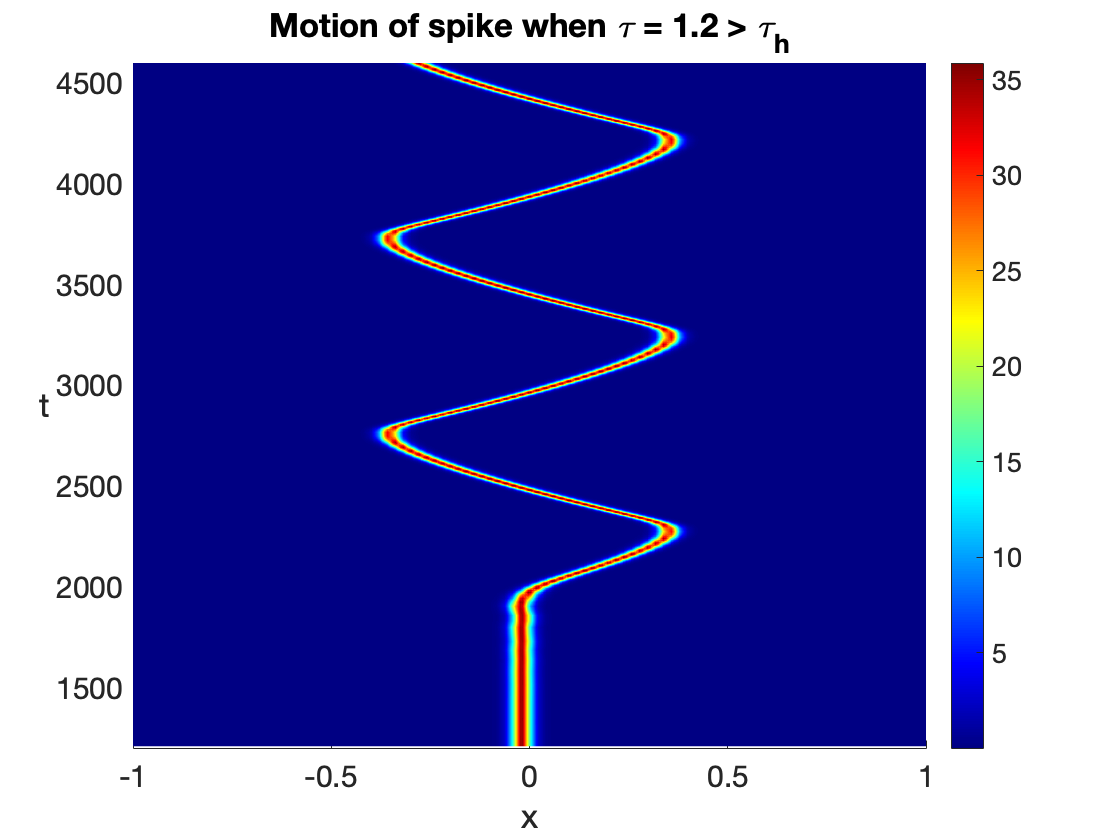} \quad
\includegraphics[width=0.48\textwidth]{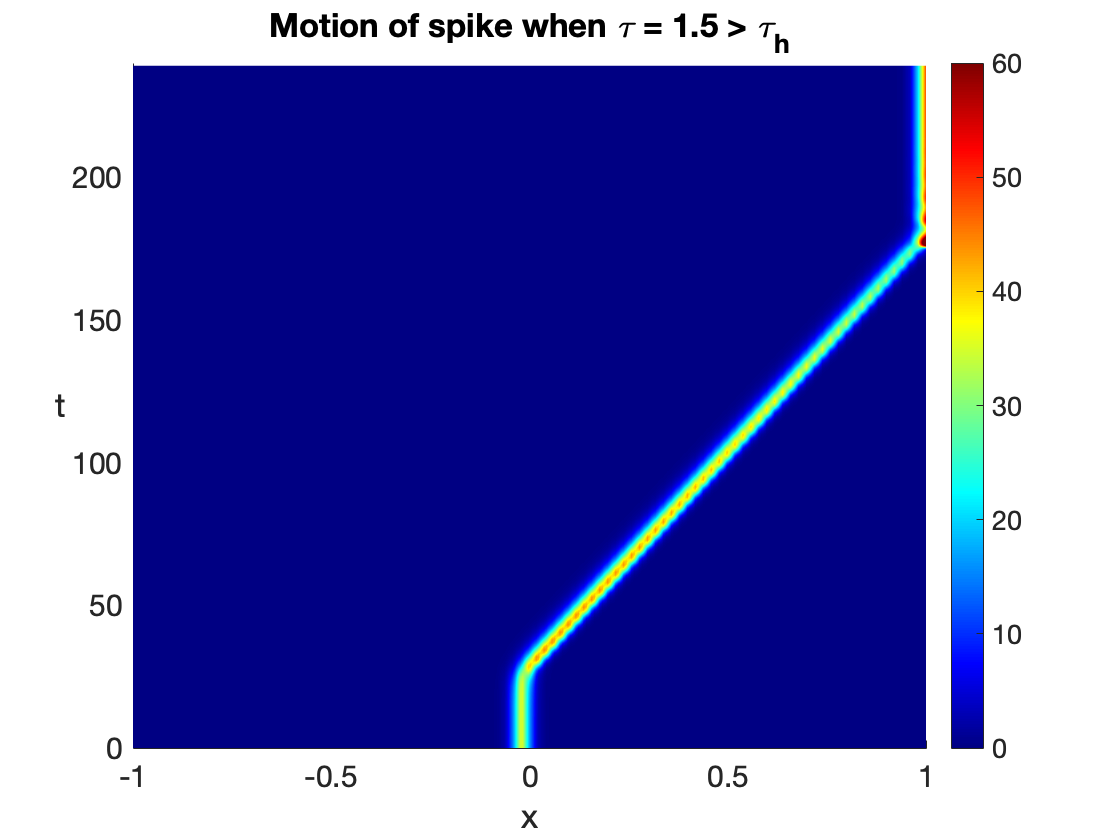} \newline
(a) ~~~~~~~~~~~~~~~~~~~~~~~~~~~~~~~~~~~~~~~~~~~~~~~~~~~~~~~~~~~~ (b)
\caption{Full simulations by {Flexpde} illustrating spike motions for different values of $\tau>\tau_h=1.18$; 
(a) For $\tau=1.2$, the interior spike exhibits small oscillations. (b) For $\tau=1.5$, the spike undergoes larger excursions and eventually drifts toward the boundary. Other parameters are: $\delta_1=0.01^2, D_v=1, a=0.01, b=1, c=1, l=1,\theta=0.$}
\label{fig:seig_plot}
\end{center}
\end{figure}

\subsection{Outline}
The rest of this paper is organized as follows. In Section \ref{Sec2} we use the method of matched asymptotic expansions in the limit $\delta_1\to 0$ to construct quasi-steady state spike solutions for (\ref{Gm}).  We will derive critical values for inhibitor diffusivity $D_v$ at which spike nucleation will occur through slow passage beyond the saddle-node of a nonlinear boundary value problem defined in the outer region away from the core of a spike. In addition, for the regime $a\ll 1$, we obtain an explicit analytical result for the spike profile.

Section \ref{Sec3} $\&$ \ref{Sec4} are devoted to the stability of these spike equilibria in the regime $a\ll 1$. In Section 3, we analyze the spectrum of large eigenvalues by deriving a novel Nonlocal eigenvalue problem (NLEP). Here, three scenarios are considered depending on the value of $\theta$ and $\tau$.   We find that the large eigenvalue instability associated with $\tau$ alone is not observed in full numerical simulations of an interior-spike pattern. Instead, before the large eigenvalue instability is reached, the small eigenvalue becomes unstable and triggers oscillation in slow spike dynamics. 

Finally, Section \ref{Sec5} discusses the implications of our results in relation to existing theory and concludes with directions for future work.

\section{Semi-strong interaction asymptotic analysis} \label{Sec2}
In this section, we apply an asymptotic analysis of semi-strong interactions to construct a one-spike solution centered at $x=0$ to the GM model (\ref{Gm}) defined on the domain $|x|\leq l$, which satisfies the following steady-state problem: 
\begin{subequations}\label{Gmhom}
\begin{alignat}{2}
 a-u+\frac{u^3}{w v}+\delta_1 \frac{\partial^2 u}{\partial x^2}&=0,\\
 u^2-b v+D_v\frac{\partial^2 v}{\partial x^2}&=0,\\
%
 u -cw +\delta_1^2 \frac{\partial^2 w}{\partial x^2}&=0,   
\end{alignat}
\end{subequations}
which are subject to Neumann boundary conditions. 

We will show a novel type of instability that can arise when the system has a nonzero background, that is, $a\neq 0$. This instability is known as spike insertion or nucleation, which involves the emergence of new spikes either near the domain boundaries or at the midpoint between adjacent spikes. For certain ranges of the parameters $a, b, $ and $c$, we show that this instability first occurs as $D_v$ decreases below a
saddle-node point associated with a nonlinear outer problem.

Our analysis follows a similar framework to the recent study \cite{gai2025asymptotic}, which couples a nonlinear inner problem for the spike profile to a nonlinear reduced scalar BVP defined in the outer region away from the spike.

In the mean time, we are interested in the case which arrive at a solution for equation \eqref{Gmhom} that exhibits homoclinic properties in space, before eventually reaching a homogeneous steady state
$$
u \to bc + a, \qquad v\to  \frac{u^2}{b}, \qquad w\to \frac{u}{c} \quad \mbox{as } x \to \pm l.
$$
In this case, however, we will show that, in the presence of the homogeneous steady state, nucleation instabilities do not occur.

\subsection{Asymptotic construction of quasi-equilibria: The inner solution} \label{sec2:inner}
To  construct a quasi steady-state spike solution for \eqref{Gmhom}, in the inner region near $x=0$, we introduce the inner variables $y, U(y), V(y)$ and $W(y)$ by

\begin{equation} \label{sec2_inner_var}
	y= \frac{x}{\sqrt{\delta_1}}, \quad u=\frac{U}{\sqrt{\delta_1}}, \quad v=\frac{V}{\sqrt{\delta_1}}, \quad w=\frac{W}{\sqrt{\delta_1}}, 
\end{equation} 
so that the steady-state problem (\ref{Gmhom}) transforms to
\begin{subequations}\label{sec2_Inner}
	\begin{align}
    &U_{yy} -U+ \frac{U^3}{WV}+a\sqrt{\delta_1}=0,\label{inner_1}\\
    &D_v V_{yy}-b\delta_1V+\sqrt{\delta_1} U^2=0,\label{inner_2}\\
	&\delta_1W_{yy}-cW+U=0\label{inner_3}
	\end{align}
\end{subequations} 
on  $y \geq 0$, with $U_y(0)=0$, $V_y(0)=0$, and $W_y(0)=0$. Upon expanding $U(y), V(y), W(y)$ in the power of $\sqrt{\delta_1}$
\begin{equation} \label{sec2_expansion}
	U(y)=U_0+\sqrt{\delta_1} U_1+O(\delta_1),\quad
	V(y)=V_0+\sqrt{\delta_1} V_1+O(\delta_1),\quad
 W(y)=W_0+\sqrt{\delta_1} W_1+O(\delta_1),
\end{equation} 
and substituting them into (\ref{sec2_Inner}) we find that $W=c^{-1}U$ and $V_0$ is a constant to be determined. Then $V_1$ satisfies
\begin{equation} \label{sec2_V1eqn}
    V_{1yy}=-\frac{U_0^2}{D_v}.
\end{equation}
To reflect the non-zero far-field of the activator, we take $U_0$ to be the homoclinic solution of
\begin{equation}\label{sec2_U0_eqn}
    U_{0yy}-U_0+c\frac{U_0^2}{V_0}+a\sqrt{\delta_1}=0;\quad y\geq0, \quad U_{0y}(0)=0, \quad U_0(0)>0.
\end{equation}
Solving (\ref{sec2_U0_eqn}) yields the homoclinic solution
\begin{equation}\label{sec2_U0}
    U_0=\frac{V_0}{c}\left(w_c(y)+\gamma\right),
\end{equation}
where $w_c(y)$  is the unique solution to
\begin{equation}\label{sec2_w_eqn}
    w_{cyy}-(1-2\gamma)w_c+w_c^2=0;\quad w_{cy}(0)=0, \quad w_c(0)>0 \quad \lim_{y\to\infty}w_c=0.
\end{equation}
Here, $\gamma$ satisfies the quadratic equation
\begin{equation} \label{sec2_gam_eqn}
    \gamma^2-\gamma+\frac{ac\sqrt{\delta_1}}{V_0}=0.
\end{equation}
Since we require $\gamma < \frac{1}{2}$ in (\ref{sec2_w_eqn}), we must take $\gamma$ as the smaller root of (\ref{sec2_gam_eqn}), which is given by
\begin{equation}\label{sec2_gam}
    \gamma=\frac{1-\sqrt{1-\frac{4ac\sqrt{\delta_1}}{V_0}}}{2}.
\end{equation}

Note that for $\delta_1\ll 1$, we have from Taylor expansion that $\gamma \sim \frac{ac\sqrt{\delta_1}}{V_0} +O(\delta_1)$. Then the explicit solution to (\ref{sec2_w_eqn}) is calculated as
\begin{equation}\label{sec2_w}
    w_c(y)=\frac{3}{2}\left(1-2\gamma\right) \text{sech}^2\left(\frac{\sqrt{1-2\gamma}}{2}y\right),
\end{equation}
which, from (\ref{sec2_U0}), determines the homoclinic solution to (\ref{sec2_U0_eqn}) up to a constant $V_0$ to be found. 

We now summarize the result in the inner region as follows:

\begin{prop}
In the limit $\delta_1 \to 0$, system (\ref{Gmhom}) admits a one-spike steady-state solution centered at $x = 0$, the steady state is given by
\begin{equation} \label{sec2_inner_ss}
    u\sim \frac{U_0}{\sqrt{\delta_1}}=\frac{V_0}{c\sqrt{\delta_1}}\left(w_c(y)+\gamma\right),\quad
    v\sim \frac{V_0}{\sqrt{\delta_1}}, \quad w=\frac{u}{c}.
\end{equation}
where $\gamma$ is given by (\ref{sec2_gam}), $w_c(y)$ is given in (\ref{sec2_w}), and $V_0$ is to be determined.
\end{prop}

Next, we determine the far-field behavior of $U$ and $V$.  By letting $y \to \infty$, we use (\ref{sec2_inner_var}), (\ref{sec2_expansion}) and (\ref{sec2_U0}) to conclude that
\begin{equation} \label{sec2_u_far}
    u\sim\frac{V_0\gamma}{c\sqrt{\delta_1}},\quad \text{as} \quad y\to \infty.
\end{equation}
In terms of the far-field behavior of $V$, by substituting (\ref{sec2_U0}) into (\ref{sec2_V1eqn}), we obtain that
\begin{align}    \label{sec2_V1yy}
    V_{1yy}&=\frac{1}{D_v}\left(\frac{V_0}{c}\right)^2\left(w_c+\gamma\right)^2 \nonumber \\ 
    &=-\frac{V_0^2}{D_vc^2}\gamma^2-\frac{V_0^2}{D_vc^2}\left(w_c^2+2\gamma w_c\right). 
\end{align}
Upon integrating (\ref{sec2_V1yy}) using the boundary condition $V_{1y}(0) = 0$, we obtain for any $y > 0$ that
\begin{equation} \label{sec2_V1y}
    V_{1y}=-\frac{V_0^2}{D_vc^2}\gamma^2 y-\frac{V_0^2}{D_vc^2}\left(\int_0^yw_c^2\,ds+2\gamma\int_0^yw_c \,ds\right).
\end{equation}
To determine the limiting behavior as $y \to \infty$, we use (\ref{sec2_w}) to calculate
\begin{equation} \label{sec2_int_w}
    \int_0^{\infty} w_c \,dy=3\sqrt{1-2\gamma}, \quad \int_0^{\infty} w_c^2 \,dy=3\left(1-2\gamma\right)^{3/2}.
\end{equation}
Now using (\ref{sec2_int_w}) in (\ref{sec2_V1y}), we conclude that
\begin{equation} \label{sec2_V1y_far}
    \lim_{y\to\infty} \left(V_{1y}+\frac{V_0^2}{D_vc^2}\gamma^2 y\right) = -\frac{V_0^2}{D_vc^2}\left( 3\left(1-2\gamma\right)^{3/2}+6\gamma\sqrt{1-2\gamma} \right) = -3\frac{V_0^2}{D_vc^2}\sqrt{1-2\gamma}.
\end{equation}
In this way, by using (\ref{sec2_V1y_far}) together with (\ref{sec2_inner_var}) and (\ref{sec2_expansion}) we obtain that $v$ has far-field behavior
 \begin{equation}\label{sec2_v_far}
     v\sim \frac{V_0}{\sqrt{\delta_1}}-3\frac{V_0^2}{D_vc^2}\sqrt{1-2\gamma}y-\frac{V_02^2}{2D_vc^2}\gamma^2y^2, \quad \text{as} \quad y\to \infty.
 \end{equation}
\subsection{Asymptotic construction of quasi-equilibria: The outer solution} \label{sec2:outer}
In this section, we match the far-field behavior of the inner solution with an outer solution valid on $0^+ < |x| < l$, to determine $V_0$ and complete the construction of the spike solution.  As this outer solution is even, we focus on half of the domain, where it satisfies
\begin{subequations}\label{sec2_outer}
\begin{alignat}{2}
 a-u+\frac{u^3}{w v}&=0,  \quad u_x(l)=0, \label{outer_1}\\
D_vv_{xx}+u^2-b v&=0, \quad v_x(l)=0,\label{outer_2}\\
 u -cw &=0, \quad w_x(l)=0. \label{outer_3} 
\end{alignat}
\end{subequations}
Solving $w$ in (\ref{outer_3}) and substitute it into (\ref{outer_1}), we obtain that 
\begin{equation} \label{sec2_v}
    v=\frac{cu^2}{u-a} \quad \text{for} \quad u>a,
\end{equation}
which implies 
\begin{equation} \label{sec2_v_x}
    v_x=-\frac{cu(2a-u)}{(u-a)^2}u_x \quad \text{for} \quad u>a.
\end{equation}
Upon substituting (\ref{sec2_v}) and (\ref{sec2_v_x}) into (\ref{outer_2}) we obtain that the outer problem for $u$ is

\begin{equation}\label{sec2_u_outer_problem}
    D_v\left(f(u)u_x\right)_x=R(u), \quad 0^+<x<l, \quad u_x(l)=0,
\end{equation}
where 
\begin{equation} \label{sec2_f,R}
    f(u)=\frac{cu(2a-u)}{(u-a)^2}, \quad\text{and} \quad R(u)=u^2-b\frac{cu^2}{u-a}.
\end{equation}

The problem (\ref{sec2_u_outer_problem}) is well-posed when $u>0$ and $u_x>0$ on $(0^+,l)$. This implies that we must have $a<u<2a$ on  $(0^+,l)$. 

Next, we derive the matching conditions between the inner and outer solution. From (\ref{sec2_u_far}), together with (\ref{sec2_gam}) for $\gamma$, the first matching condition for the outer solution is 
\begin{equation} \label{sec2_u(0p)}
    u(0^+)=\frac{V_0}{2c\sqrt{\delta_1}}\left(1-\sqrt{1-\frac{4ac\sqrt{\delta_1}}{V_0}}\right).
\end{equation}

For $\frac{4ac\sqrt{\delta_1}}{V_0}<1$, we claim that $u(0^+)>a$. To establish this inequality, we introduce $z=\frac{2ac\sqrt{\delta_1}}{V_0}$ and observe that
\begin{equation}
    \frac{u(0^+)}{a}=\frac{1-\sqrt{1-2z}}{z}.
\end{equation}
Since $\sqrt{1-2z}<1-z$ on $0<z<\frac{1}{2}$, the expression above yields $u(0^+)>a$ whenever $\frac{4ac\sqrt{\delta_1}}{V_0}<1$.

The second matching condition involves matching the flux $u_x$ as $x\to 0^+.$ We first observe that the $\mathcal{O}(y^2)$ term in the far-field behavior (\ref{sec2_v_far}) matched exactly with the quadratic term in the near-field behavior of $u$ as $x\to 0^+$ that arises from the $u^2$ term in (\ref{outer_2}). From (\ref{sec2_v_far}) we conclude that  

\begin{equation} \label{sec2_v_x(0)}
    v_x=-\frac{3V_0^2}{\sqrt{\delta_1}D_vc^2}\sqrt{1-2\gamma} \quad \text{as} \quad x\to 0^+.
\end{equation}

Using (\ref{sec2_v_x}) and (\ref{sec2_v_x(0)}), we obtain the second matching condition for the outer solution in terms of $v$, which is given by
\begin{equation} \label{sec2_matching}
    \lim_{x\to 0^+}-v_x=\lim_{x\to 0^+}f(u)u_x=\frac{3V_0^2}{\sqrt{\delta_1}D_vc^2}\sqrt{1-2\gamma}.
\end{equation}

Next, we establish the following lemma.

\begin{lemma}\label{lemma:R(u)}
    Suppose that $bc > a >0$. Then, on the range of $x$ where $a < u < 2a$, we have $R(u) < 0$
and, consequently, $\frac{du}{dx} > 0$.
\end{lemma}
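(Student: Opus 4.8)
The plan is to analyze the sign of $R(u)$ directly from its factored form and then invoke the structure of the outer ODE to deduce monotonicity. From \eqref{sec2_f,R} we have $R(u) = u^2 - \frac{bcu^2}{u-a} = u^2\left(1 - \frac{bc}{u-a}\right) = u^2\cdot\frac{u - a - bc}{u-a}$. On the range $a < u < 2a$, the factor $u^2 > 0$ and the denominator $u - a > 0$, so the sign of $R(u)$ is the sign of $u - a - bc$. Since $u < 2a$ and $bc > a$, we have $u - a - bc < 2a - a - bc = a - bc < 0$. Hence $R(u) < 0$ throughout the relevant range.

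Next I would deduce $u_x > 0$. Integrate \eqref{sec2_u_outer_problem} from $x$ to $l$ using the boundary condition $u_x(l) = 0$, which gives $D_v f(u(x)) u_x(x) = -\int_x^l R(u(s))\,ds$. Since $R < 0$ on $(0^+, l)$ (using that $a < u < 2a$ there, which is the well-posedness assumption noted after \eqref{sec2_f,R}), the right-hand side is strictly positive. On the same range, $f(u) = \frac{cu(2a-u)}{(u-a)^2} > 0$ because $c > 0$, $u > 0$, and $2a - u > 0$. Therefore $u_x(x) > 0$ for $0^+ < x < l$, as claimed.

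The subtlety — and the only place where one must be a little careful — is the logical ordering: the well-posedness condition "$a < u < 2a$ on $(0^+, l)$" is itself what makes $f(u) > 0$ and is used to sign $R(u)$, so the lemma is really a statement conditional on that range being maintained, rather than an unconditional global fact. I would phrase the proof to make clear that we work on the $x$-interval where $a < u < 2a$ holds (as the lemma statement does), so the argument is a clean implication: on that interval $R(u) < 0$, and then the integrated-flux identity forces $u_x > 0$. No real obstacle arises; the factorization $R(u) = u^2(u - a - bc)/(u-a)$ is the key algebraic observation, and everything else is a sign check combined with one integration of the divergence-form ODE.
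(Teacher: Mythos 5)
Your proof is correct, and the first half takes a genuinely different route from the paper's. To show $R(u)<0$, the paper computes $R'(u)=2u+bf(u)>0$ on $a<u<2a$ and combines this with the endpoint evaluation $R(2a)=4a(a-bc)<0$ to conclude that $R$, being increasing and negative at the right end, is negative throughout. You instead factor $R(u)=u^2\cdot\frac{u-a-bc}{u-a}$ and read off the sign directly from $u-a-bc<a-bc<0$, which is shorter and more elementary: it needs no derivative computation and no appeal to monotonicity. (The paper still needs $R'>0$ later, e.g.\ in (\ref{sec2_chi_prime}) and in Section \ref{Sec2: no nucleation}, so establishing it inside the lemma is not wasted effort there — but it is not required for the conclusion of the lemma itself.) The second half of your argument, integrating $D_v(f(u)u_x)_x=R(u)$ from $x$ to $l$ with $u_x(l)=0$ and using $f>0$, is exactly the paper's. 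Your closing remark about the conditional nature of the well-posedness range $a<u<2a$ is a fair and accurate reading of how the lemma is stated and used.
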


\begin{proof}
    From (\ref{sec2_f,R}) we observe that
\[ \lim_{x\to a^+}R(x)=-\infty, \quad R(2a)=4a(a-bc). \]
It follows that $R(2a) < 0$ whenever $bc > a $. Moreover, we calculate the derivative 
\begin{equation}\label{Rprime}
    R'(u) = 2u + bf(u),
\end{equation}
where $f(u)$ is defined in (\ref{sec2_f,R}). Since $f(u) > 0$ on $a < u < 2a$, it follows that $R'(u) > 0$ on this interval. Therefore, $R(u)$ is strictly increasing on $a < u < 2a$, and $R(u) < 0$ for $bc > a $.

Moreover, upon integrating (\ref{sec2_u_outer_problem}), and imposing $u_x(l) = 0$, we obtain on $0 < x < l$ that
\begin{equation}\label{sec2_u_outer_integrate}
    D_vf(u)u_x|_x^l=- D_vf(u)u_x=\int_x^l R(u(s))ds<0 
\end{equation}
whenever $a < u < 2a$ and $bc > a $. Since $f(u) > 0$ for $a < u < 2a$, we conclude that $u_x > 0$ on this interval.
\end{proof}

The remaining steps in the analysis to construct the quasi-steady state is to solve (\ref{sec2_u_outer_problem}). We define $\mathcal{G}$ by

\begin{equation}\label{sec2_G_prime}
    \mathcal{G'(\xi)} \equiv -R(\xi) f(\xi)=c(bc+a-\xi)(2a-\xi)\frac{\xi^3}{(\xi-a)^3}>0 \quad \text{on} \quad a<\xi<2a.
\end{equation}
 A first integral of (\ref{sec2_G_prime}) yields
\begin{equation}\label{sec2_G}
    G(\xi)=c\left(\frac{1}{3}(\xi-a)^3+\frac{2a-bc}{2}(\xi-a)^2-2abc(\xi-a)-2a^3\ln(\xi-a)+\frac{a^4-2a^3bc}{\xi-a}-\frac{a^4bc}{2(\xi-a)^2}\right).
\end{equation}

Upon first multiplying (\ref{sec2_u_outer_problem}) by $f(u)u_x$ and then integrating, we use the monotonicity of $u(x)$ and equation (\ref{sec2_G_prime}) to obtain
\begin{equation}\label{sec2_first integral of outer}
    -\frac{1}{2}D_v\left(f(u)u_x\right)^2=\int_{x}^lR(u)f(u)u_x dx dx=-\int_{u(x)}^{\mu}\mathcal{G'(\xi)}d\xi =\mathcal{G}(\mu)-\mathcal{G}(u(x)).
\end{equation}

 Here $\mathcal{G}(\xi)$ is given in (\ref{sec2_G}) and $\mu\equiv u(l)$ satisfies $a < \mu  \leq 2a$. By taking the positive square root in (\ref{sec2_first integral of outer}), we have
\begin{equation} \label{sec2_sqrt_outer}
    f(u)u_x=\sqrt{\frac{2}{D_v}}\sqrt{\mathcal{G}(\mu)-\mathcal{G}(u)}.
\end{equation}

Letting $x\to 0^+$ in (\ref{sec2_sqrt_outer}) and imposing the matching condition (\ref{sec2_matching}) we conclude that $V_0$ is related to $\mu$ by the following nonlinear algebraic equation
\begin{equation}\label{sec2_Newton_1}
    \frac{3V_0^2}{\sqrt{2\delta_1}\sqrt{D_v}c^2}\sqrt{1-2\gamma}=\sqrt{\mathcal{G}(\mu)-\mathcal{G}(u(0^+))},
\end{equation}
where $u(0^+)$ and $\gamma$ are given in terms of $V_0$ by
\begin{equation} \label{sec2_u(0^+)}
    u(0^+)=\frac{V_0}{c\sqrt{\delta_1}}\gamma, \quad \text{where}\quad \gamma=\frac{1-\sqrt{1-\frac{4ac\sqrt{\delta_1}}{V_0}}}{2}.
\end{equation}

Next, upon integrating the separable ODE (\ref{sec2_sqrt_outer}), we get an implicit relation for $u(x)$ on $0^+ < x < l$ given by 
\begin{equation}\label{sec2_out_solution}
    \chi(u(x))=\sqrt{\frac{2}{D_v}}x, \quad \text{where} \quad \chi(u(x))=\int_{u(0^+)}^{u(x)} \frac{f(\xi)}{\sqrt{\mathcal{G}(\mu)-\mathcal{G}(\xi)}}d
    \xi.
\end{equation}

Then, by setting $x = l$ and $\mu\equiv u(l)$ in (\ref{sec2_out_solution}), we obtain an implicit equation for $\mu$ given by

\begin{equation}\label{sec2_implicit_mu}
    \chi(\mu)=\int_{u(0^+)}^{\mu} \frac{f(\xi)}{\sqrt{\mathcal{G}(\mu)-\mathcal{G}(\xi)}}d
    \xi=\sqrt{\frac{2}{D_v}}l,
\end{equation}
with $u(0^+) > a$ given by (\ref{sec2_u(0^+)}).

Since the integral in (\ref{sec2_out_solution}) is improper at $\xi = \mu$, to obtain a more tractable formula for $\chi(\mu)$ we integrate the expression in (\ref{sec2_implicit_mu}) by parts by using $f(\xi) = -\mathcal{G}'(\xi)/R(\xi)$. This yields the proper integral

\begin{equation}\label{sec2_proper_chi}
    \chi(\mu)=-2\frac{\sqrt{\mathcal{G}(\mu)-\mathcal{G}(u(0^+))}}{R(u(0^+))}+2\int_{u(0^+)}^{\mu}\frac{\sqrt{\mathcal{G}(\mu)-\mathcal{G}(\xi)}}{R^2(\xi)}R'(\xi) d\xi,
\end{equation}
where $R(\xi) = \xi^2 - bc\frac{\xi^2}{\xi-a}$ and $\mathcal{G}(\xi)$ is given in (\ref{sec2_G}). On the range $\mu > u(0^+)$, we observe that $\chi(\mu)$ is
positive since $R(\xi) < 0$ and $R'(\xi) > 0$ on $a < \xi < 2a$. Moreover, differentiating (\ref{sec2_proper_chi}) yields 
\begin{equation}\label{sec2_chi_prime}
    \chi'(\mu)= \frac{f(\mu)}{\sqrt{\mathcal{G}(\mu)-\mathcal{G}(u(0^+))}}\frac{R(\mu)}{R(u(0^+))}+\mathcal{G}'(\mu)\int_{u(0^+)}^{\mu}\frac{R'(\xi)}{\sqrt{\mathcal{G}(\mu)-\mathcal{G}(u(0^+))}R^2(\xi)}d\xi.
\end{equation}
Since $f(\xi) > 0, \mathcal{G}'(\xi) > 0$ and $R'(\xi) > 0$ on $a < \xi < 2a$, it follows that $\chi(\mu)$ is a monotonically increasing function of $\mu$ on $u(0^+) < \mu < 2a$ that reaches its maximum value at $\mu = 2a$. As such, recalling that $\mu = u(l)$, we define

\begin{equation}\label{sec2_chi_max}
    \chi_{max} \equiv \chi(2a), \mu_{max} \equiv 2a. 
\end{equation}
We now summarize our asymptotic construction of a one-spike solution to (\ref{Gmhom}) and the mechanism of nucleation instability as follows:
\begin{prop}
     For the regime $a < bc$, the asymptotic construction of a one-spike solution to (\ref{Gmhom}) on $|x| \leq l$ with $\delta_1 \ll 1$ reduces to solving the coupled nonlinear algebraic system (\ref{sec2_Newton_1}) and (\ref{sec2_implicit_mu}) for
$V_0$ and $\mu = u(l)$ in terms of parameters $a, b, c, l, D_v,$ and $\delta_1$.  By solving the two equations simultaneously, we can obtain the value of $V_0$, and complete the spike construction.

Moreover, the one-spike solutions on $-l \leq x \leq l$ undergoes spike insertion (nucleation) as $D_v$ decreases with nonzero background $a\neq 0$. The nucleation threshold $D_{nuc}$ is found by first setting $\mu = \mu_{max}=2a$ in (\ref{sec2_Newton_1}) and solving for $V_0$. This determines $u(0^+)$ from (\ref{sec2_u(0^+)}) as needed in calculating $\chi_{max}$ from (\ref{sec2_chi_max}). In this way, in
terms of $\chi_{max}$ as defined in (\ref{sec2_proper_chi}), spike nucleation for a one-spike solution is predicted to occur when
\begin{equation}\label{nuc_threshold}
    D_v<D_{nuc}\equiv\frac{2l^2}{\chi^2_{max}}.
\end{equation}
\end{prop}

 To validate the analytical spike construction, Figure \ref{fig:GM_V0_newton} compares the asymptotic spike profile with full numerical simulations. In particular, the predicted spike amplitude, obtained by solving the coupled nonlinear system (\ref{sec2_Newton_1}) and (\ref{sec2_implicit_mu}) using Newton’s method, is shown with the numerically computed steady-state solution. The good agreement between theory and computation confirms the accuracy of the asymptotic construction.

\begin{figure}[htbp]
    \centering
    \includegraphics[width=0.6\textwidth]{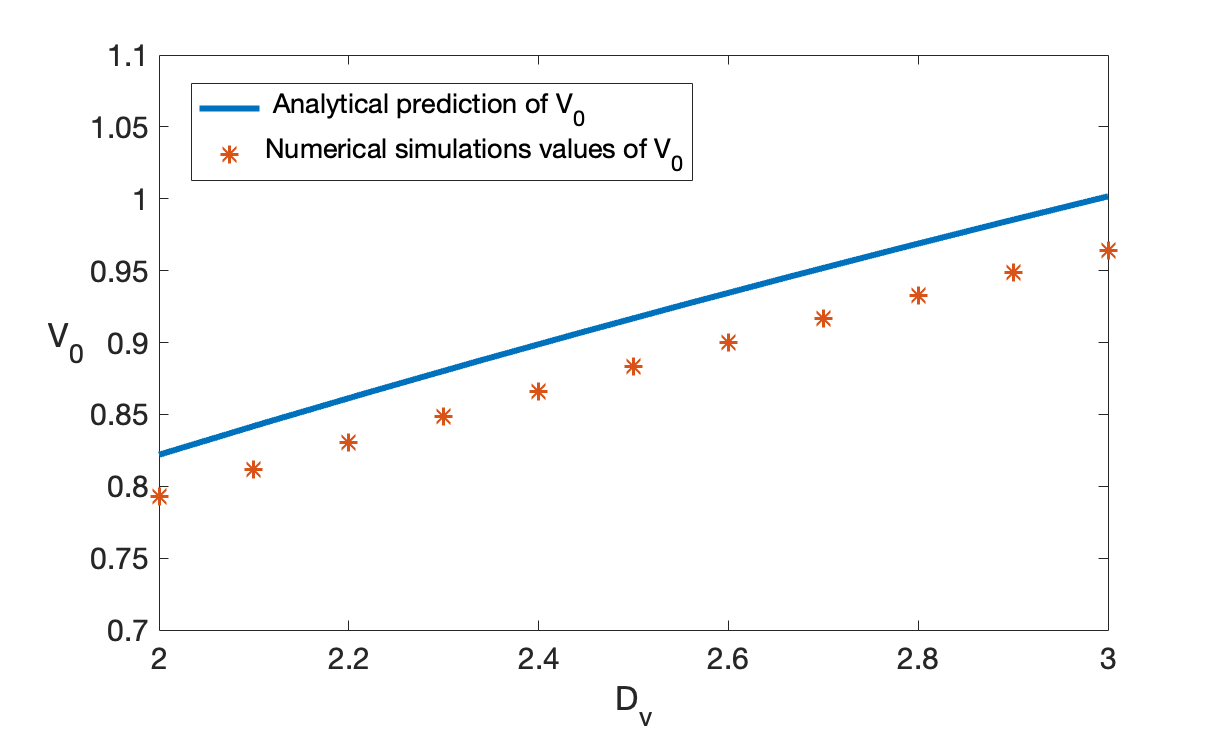}
    \caption{Comparison between analytical and numerical results for $V_0$ as $D_v$ is varied. The solid curve is the analytical result by solving the coupled nonlinear system (\ref{sec2_Newton_1}) and (\ref{sec2_implicit_mu}) using Newton’s method. The red stars are obtained by full simulations of the GM model \eqref{Gm} using \textit{pde2path} \cite{pde2path}. Parameters: $\delta_1 = 0.01^2,a=1,  b=3, c=1$ and $\ell = 3$.}
    \label{fig:GM_V0_newton}%
\end{figure}

In Figure \ref{fig:GM_nuc_D} we compare the asymptotic prediction $D_{nuc}$ in (\ref{nuc_threshold}) for the
critical diffusion threshold of spike nucleation with the numerically computed location of the saddle-node bifurcation point of the full system (\ref{Gm}) using pde2path \cite{pde2path}.  Given the condition that $a<bc=1$, we fix $b=1$ and $c = 1$ in Figure \ref{fig:GM_nuc_D}, and plot $D_{nuc}$ as a function of $a$ for values satisfying $a<1.$ The comparison shows a nice agreement, which verifies the accuracy of the asymptotic prediction.
\begin{figure}[htbp]
    \centering \includegraphics[width=0.55\textwidth, height=6.0cm]{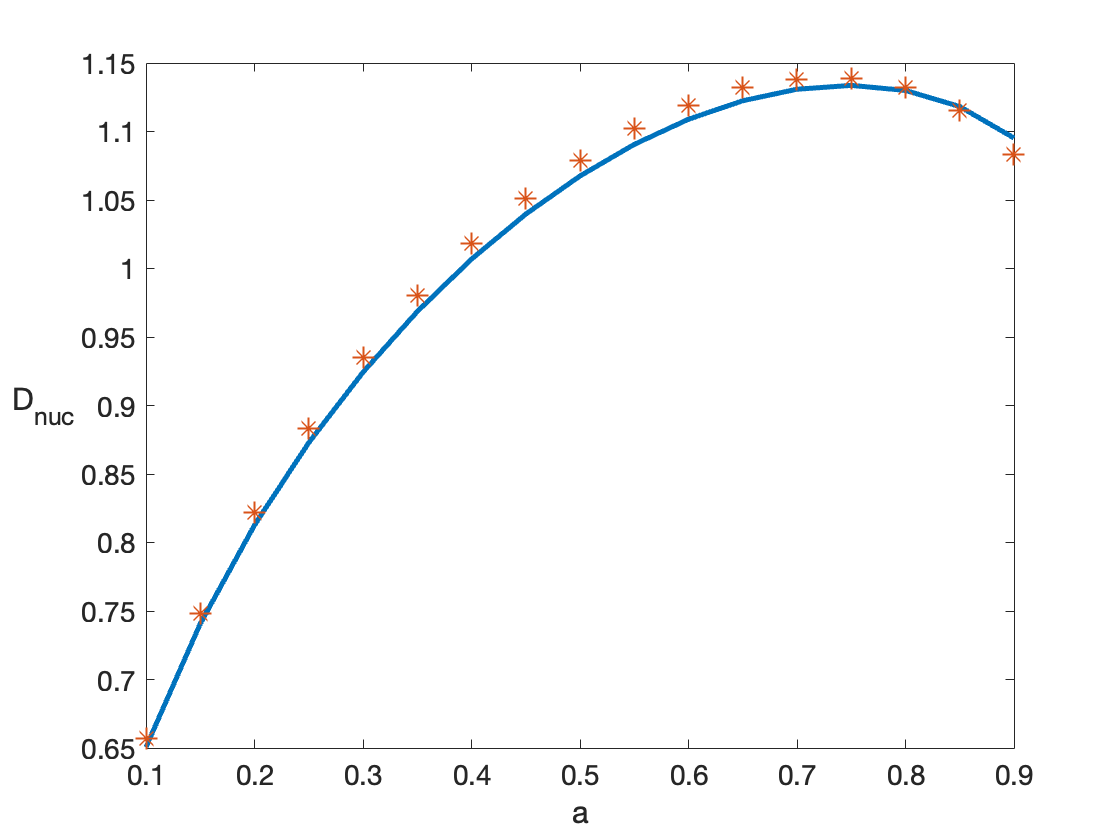}
    \caption{Comparison between asymptotic and numerical results for the nucleation threshold $D_{nuc}$ versus $a$. The solid curves are the asymptotic results given in \eqref{nuc_threshold}. The red stars are the numerical results for the saddle-node bifurcation point as computed from \eqref{Gm} using \textit{pde2path} \cite{pde2path}. Parameters: $\delta_1 = 0.01^2$, $l=4$, $b=1$ and $c = 1$.}
    \label{fig:GM_nuc_D}
\end{figure}

\subsection{Global bifurcation diagram and full PDE simulations}\label{sec2: nuc_bif}

To illustrate the global bifurcation structure, in Figure \ref{fig:GM_nuc_bif}, we consider the parameter set for $a = 0.5, \delta_1 = 0.01^2, b=1, c=1$ and $l=4$, and obtain the numerically computed global bifurcation diagram by path-following the single-spike branch for the GM model (\ref{Gm}) using \textit{pde2path} \cite{flexpde2015}.  The saddle-node bifurcation point at point $(b)$, indicated by the red star, occurs at  $D_{nuc} \approx 1.06$ and marks the onset of spike nucleation.  This value is well-approximated by the asymptotic prediction for $D_{nuc}$ in (\ref{nuc_threshold}) from our asymptotic theory, at which the outer solution ceases to exist (see Figure \ref{fig:GM_nuc_D} for $a = 0.5$). 

This global bifurcation diagram of single-spike steady-states shown in Figure \ref{fig:GM_nuc_bif} provides a detailed view on how new spikes are created near the domain boundaries as $D_v$ approaches the saddle-node bifurcation point.  In the right panel of Figure \ref{fig:GM_nuc_bif} the solution profiles $u(x)$ corresponding to the points marked in the left panel are shown. Starting from the bottom branch at point $(a)$, a single interior spike is present; this state terminates as $D$ decreases below $D_{nuc}$.  Traversing the middle branch from the saddle-node bifurcation, we observe at point $(c)$ that the nucleation of new boundary spikes is fully developed at the domain endpoints.

To further validate these bifurcation scenarios, Figure \ref{fig:GM_flexpde_plot} presents full time-dependent PDE simulations of (\ref{Gm}) computed with FlexPDE \cite{flexpde2015} as the diffusion rate $D_v$  slowly decreases in time by $D = e^{-\rho t}$ with $\rho = 10^{-4}.$ for parameters $a=0.5, \delta_1 = 0.01^2, b=1, c=1$, and $l=4$. As $D_v$ decreases, boundary spikes first appear when 
$D_v\approx 1.07$. Upon further decrease, new spikes nucleate at the midpoint between the interior spike and each boundary spike. These transition values of $D_v$, obtained from the time-dependent simulations, are well-approximated by the critical thresholds predicted asymptotically for the non-existence of the outer solution.

\begin{figure}[htbp]
    \centering
    \includegraphics[width=0.95\textwidth, height=6.0cm]{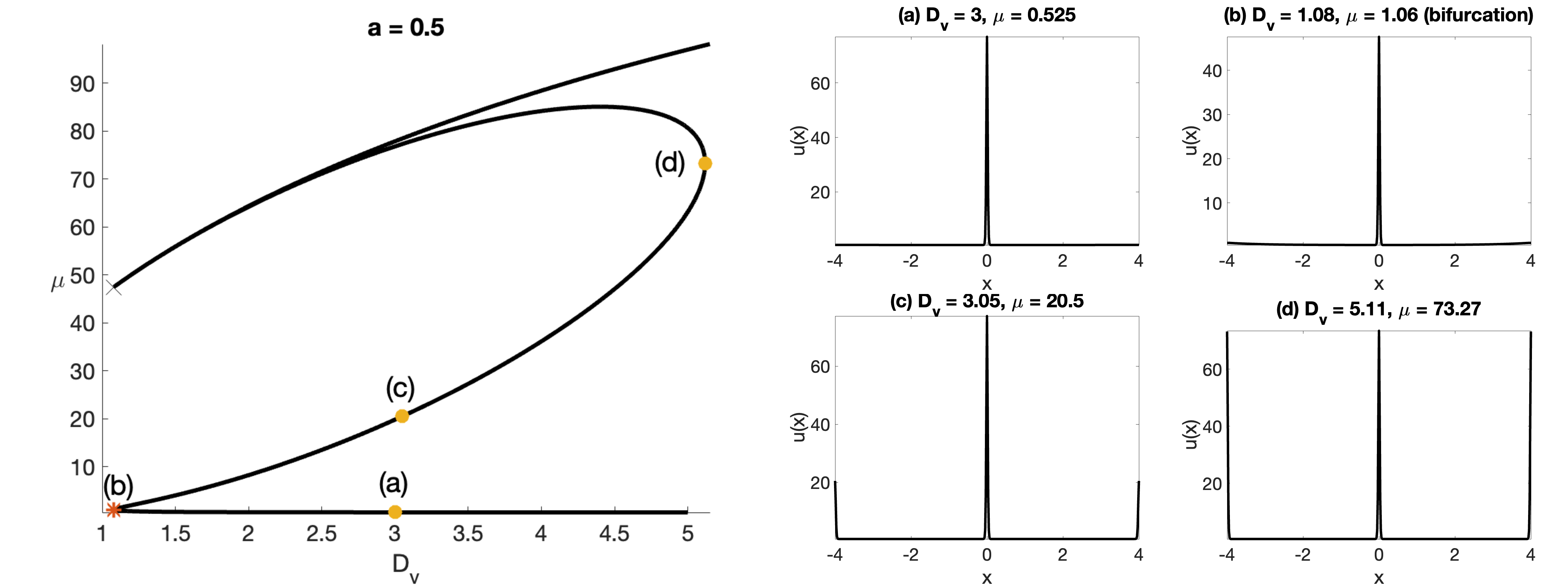}
    \caption{Left panel: Global bifurcation diagram of $\mu=u(l)$ versus $D_v$ for single-spike steady-states for the GM model \eqref{Gm} as computed using \textit{pde2path} \cite{pde2path} for $a = 0.1, b=1, c=1$, $\delta_1 = 0.01^2$ and $l = 4$. Since $a<bc<1$, we predict that spike nucleation occurs as $D_v$ is decreased starting from point (a) on the linearly stable lower branch. The red star is the saddle-node point that signifies the onset of spike nucleation behavior. Right panel: Spike profile $u(x)$ and bifurcation values at the indicated points in the left panel.}
    \label{fig:GM_nuc_bif}%
\end{figure}

\subsection{Asymptotics of the outer solution for \texorpdfstring{$a$}{a} small} \label{Sec2: a small}

In this subsection, we approximate the outer problem (\ref{sec2_u_outer_problem}) by a linear problem when $a\ll 1$. In the limit $a\ll 1$, no spike nucleation behavior occurs.

For $a \ll 1$, we obtain from (\ref{Gmhom} c) that 
$w=\frac{u}{c}$, and plug it in (\ref{Gmhom} a), we get
\begin{equation} \label{sec2_u_out_a_small}
    u\sim a+O(a^2)
\end{equation}
 in the outer region. As a result, from (\ref{Gmhom} b), we obtain that $v(x)$ satisfies
 \begin{equation}\label{sec2_v_eqn_a_small}
     D_vv_{xx}-bv=-a^2+O(a^3), \quad 0^+<x<l, \quad v_x(l)=0,
 \end{equation}
with the matching condition $v(0^+)=\frac{V_0}{\sqrt{\delta_1}}$.
Upon neglecting the $O(a^3)$ term in (\ref{sec2_v_eqn_a_small}), we calculate that 
\begin{equation}\label{sec_2_v_a_small}
    v(x)=\frac{a^2}{b}+\left(\frac{V_0}{\sqrt{\delta_1}}-\frac{a^2}{b}\right)\frac{\cosh\left(\sqrt{b}(l-|x|)/\sqrt{D_v}\right)}{\cosh(\sqrt{b}l/\sqrt{D_v})}.
\end{equation}

By imposing the second matching condition given by (\ref{sec2_v_x(0)}),  we obtain that $V_0$ must satisfy
\begin{equation}\label{sec2_V0_match}
    \frac{3V_0^2}{\sqrt{\delta_1}D_vc^2}\sqrt{1-2\gamma}=\sqrt{\frac{b}{D_v}}\left(\frac{V_0}{\sqrt{\delta_1}}-\frac{a^2}{b}\right)\tanh\left(\sqrt{\frac{b}{D_v}}l\right).
\end{equation}

For $a\ll 1$, (\ref{sec2_u(0^+)}) yields $\gamma\sim \frac{ac\sqrt{\delta_1}}{V_0} \ll 1$. By setting $\gamma\ll 1$ in (\ref{sec2_V0_match}), we get $\sqrt{1-2\gamma}\sim1-\gamma$ and that

\begin{equation}\label{sec2_V0_quadratic}
    3V_0^2-\left(3ac\sqrt{\delta_1}+\sqrt{bD_v}c^2\tanh\left(\sqrt{\frac{b}{D_v}}l\right)\right)V_0+\sqrt{\frac{D_v}{b}}a^2c^2\sqrt{\delta_1}\tanh\left(\sqrt{\frac{b}{D_v}}l\right)=0.
\end{equation}

In the limit of   $\delta_1\to 0$ and $a\ll 1$, we get two asymptotic roots of (\ref{sec2_V0_quadratic})

\begin{equation} \label{sec2_V0_roots_a_small}
    V_{0+}\sim \frac{\sqrt{bD_v}c^2}{3}\tanh\left(\sqrt{\frac{b}{D_v}}l\right), \quad V_{0-}\sim \frac{\sqrt{\delta_1}a^2}{b}. 
\end{equation}

In this way, for $a\ll 1$ our asymptotic result for $v(0) = V_0/\sqrt{\delta_1}$, after using the scaling relation (\ref{sec2_inner_var}), is that

\begin{equation}\label{sec2_v0_a_small}
     v(0)_{+}\sim \frac{\sqrt{bD_v}c^2}{3\sqrt{\delta_1}}\tanh\left(\sqrt{\frac{b}{D_v}}l\right), \quad v(0)_{-}\sim \frac{a^2}{b}.
\end{equation}

In contrast, for $a = \mathcal{O}(1)$, we predict that $v(0)\sim  \frac{V_0}{\sqrt{\delta_1}}$, where $V_0$ must be determined from the coupled
nonlinear algebraic system (\ref{sec2_Newton_1}) and (\ref{sec2_out_solution}). This highlights the qualitative difference between the small 
$a$ and $\mathcal{O}(1)$ regimes:  in the former case a closed-form approximation is available, whereas in the latter case $V_0$ must be computed numerically, for example using Newton’s method.

Figure \ref{fig:GM_V0} compares the asymptotic predictions with full numerical simulations of (\ref{Gm}). In particular, we plot the the spike amplitude $V_0$ by the simple closed-form result in terms of $D_v$ for various values of $a$.   Although (\ref{sec2_V0_roots_a_small}) and (\ref{sec2_v0_a_small}) were derived under the assumption $a\ll 1$. The good agreements observed in Figure \ref{fig:GM_V0} show that the small-$a$ asymptotics remain accurate even for moderately small $a$.

\begin{figure}[htbp]
    \centering
    \includegraphics[width=0.6\textwidth]{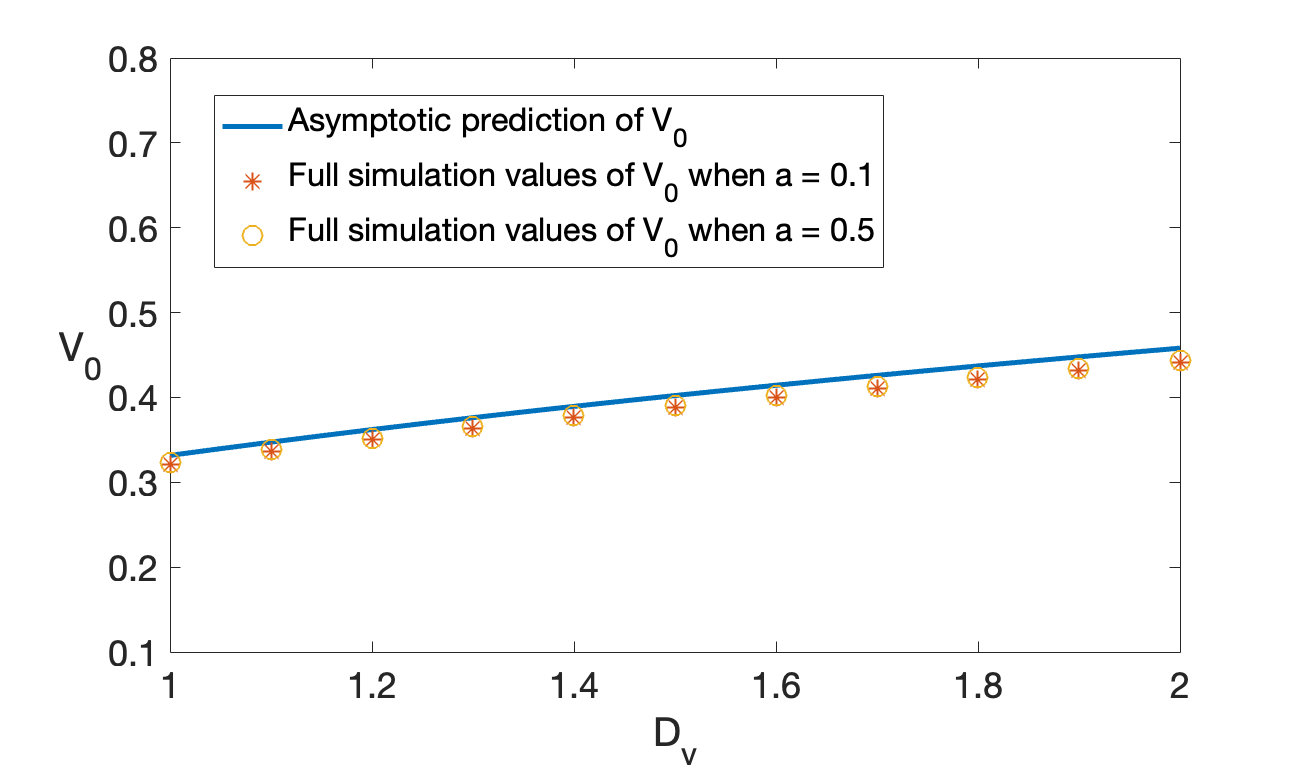}
    \caption{Comparison between asymptotic and numerical results for $V_{0+}$ as $D_L$ is varied. The solid curve is the asymptotic result given in \eqref{sec2_V0_roots_a_small} derived in the limit $a\ll 1$. The red and yellow stars are obtained by full simulations of the GM model \eqref{Gm} using \textit{pde2path} \cite{pde2path} with $a = 0.1$ and $a = 0.5$, respectively. Parameters: $\delta_1 = 0.01^2, b=1, c=1$ and $\ell = 3$.}
    \label{fig:GM_V0}%
\end{figure}

\subsection{Conditions for the absence of nucleation instability}\label{Sec2: no nucleation}

Our analyses in Section \ref{sec2:outer} and \ref{sec2: nuc_bif} have shown that, on certain parameter ranges, spike nucleation behavior can occur as the diffusivity $D_v$ decreases for the GM model in the limit $\delta_1 \ll 1$.  A key mechanism underlying this spike nucleation behavior was that there is a saddle-node bifurcation point at some finite critical value of $D_{nuc}$, beyond which the outer solution ceases to exist.

One key condition for the existence of this saddle node point is that the parameters in the GM model (\ref{Gm}) are such that there is no spatially homogeneous steady-state solution in the range where the outer problem is well defined. More specifically, as shown in the derivation in Lemmas \ref{lemma:R(u)}, we require that in (\ref{sec2_f,R} and (\ref{Rprime})
\[ R(u) < 0  \quad \text{and}  \quad R'(u) < 0,\] 
in the range $a<u<2a$ where the outer problem is well-posed.  Under this condition, the integral $\chi(\mu)$ defined from (\ref{sec2_proper_chi}) increases monotonically in $\mu$ but has finite values as $\mu\to 2a$ below. This limiting value, based on the nonexistence of the outer asymptotic solution, was used in (\ref{nuc_threshold}) to obtain a leading order prediction for the critical threshold of $D_{nuc}$ where a saddle-node point must occur along the single-spike solution branch. The existence of such a saddle-node point is the signature of the onset of spike nucleation behavior.

On the other hand, if the GM system (\ref{Gm}) admits a spatially homogeneous steady state, denoted by $u_{\infty}$, within the well-posedness of the outer problem, it follows that $R(u_{\infty})=0$, with $R'(u_{\infty})>0$. This yields 
\begin{equation}
    u_\infty= a+bc < 2a,
\end{equation}
and the saddle-node bifurcation for spike nucleation does not occur as the threshold $2a$ is not reached.

In Figure \ref{fig:GM_no_nuc}, we show a global bifurcation diagram obtained from path-following a single-spike steady-state solution of the GM model (\ref{Gm}) using pde2path \cite{pde2path} for the parameters $a = 1.5, b = 1, c=1, \delta_1=0.01^2, D_v = 1.$ Since $bc <a$, and $\mu$ approaches to the spatially homogeneous steady state $u_\infty=a+b=2.5$ but never reaches the limit $2a=3$. Therefore, we observe that no spike nucleation events will occur.

\begin{figure}[htbp]
    \centering
    \includegraphics[width=0.95\textwidth, height=6.0cm]{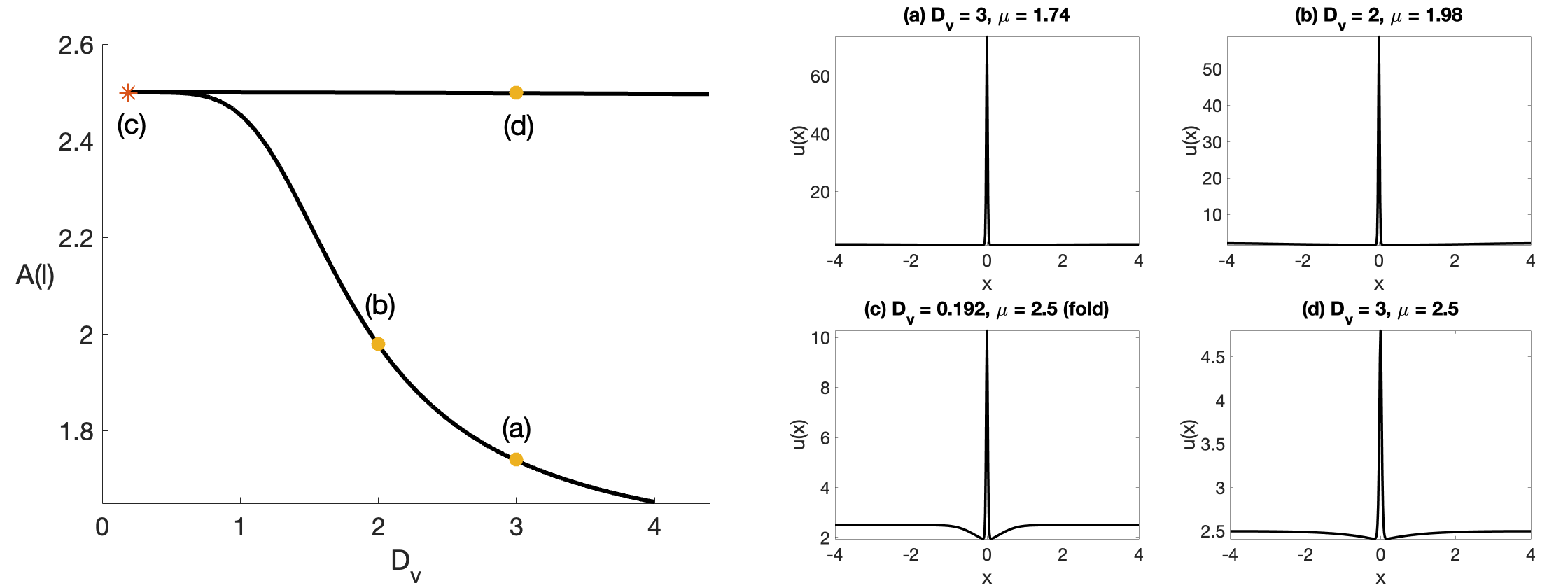}
    \caption{Left panel: Global bifurcation diagram of $\mu=u(l)$ versus $D_v$ for single-spike steady-states for the GM model \eqref{Gm} as computed using \textit{pde2path} \cite{pde2path} for $a = 1.5, b=1, c=1$, $\delta_1 = 0.01^2$ and $l = 4$. Since $a > b$, a spatially uniform steady-state occurs and there is no longer any saddle-node bifurcation on this branch.  Right panel: Spike profile $u(x)$ and bifurcation values at the indicated points in the left panel.}
    \label{fig:GM_no_nuc}%
\end{figure}

In the meantime, we compute the single-spike steady state by modifying the calculation in Section \ref{Sec2: a small} and replacing the outer approximation $u\sim a+O(a^2)$ by $u\sim a+bc$. this leads to the solution for $v(x)$ as
\begin{equation}\label{sec_2_v_no_nuc}
    v(x)=\frac{(a+bc)^2}{b}+\left(\frac{V_0}{\sqrt{\delta_1}}-\frac{(a+bc)^2}{b}\right)\frac{\cosh\left(\sqrt{b}(l-|x|)/\sqrt{D_v}\right)}{\cosh(\sqrt{b}l/\sqrt{D_v})}.
\end{equation}
Then by matching the condition (\ref{sec2_v_x(0)}) as in Section \ref{Sec2: a small}, we obtain two asymptotical roots for $V_0$ that satisfy
\begin{equation}\label{sec2_V0_quadratic_nonuc}
    3V_0^2-\left(3ac\sqrt{\delta_1}+\sqrt{bD_v}c^2\tanh\left(\sqrt{\frac{b}{D_v}}l\right)\right)V_0+\sqrt{\frac{D_v}{b}}(a+bc)^2c^2\sqrt{\delta_1}\tanh\left(\sqrt{\frac{b}{D_v}}l\right)=0.
\end{equation}
In Figure \ref{fig:hetroHome}, we compare the asymptote solution with the full numerical simulation of model \eqref{Gm} obtained using Auto. The two asymptotic values, $v_{0+}=1.47$ and $v_{0-}=0.69$ are obtained from $v_{0\pm}=\frac{V_{0\pm}}{\sqrt{\delta_1}}$ by solving the two roots of (\ref{sec_2_v_no_nuc}).  In the following section, e will show that the upper branch with $v=v_{0+}$ is stable, while the lower branch with $v=v_{0-}$ is unstable, and the asymptotic predictions in Figure \ref{fig:hetroHome} align well with both the stable and unstable branches.

\begin{figure}
	\centering
	\includegraphics[width=\textwidth]{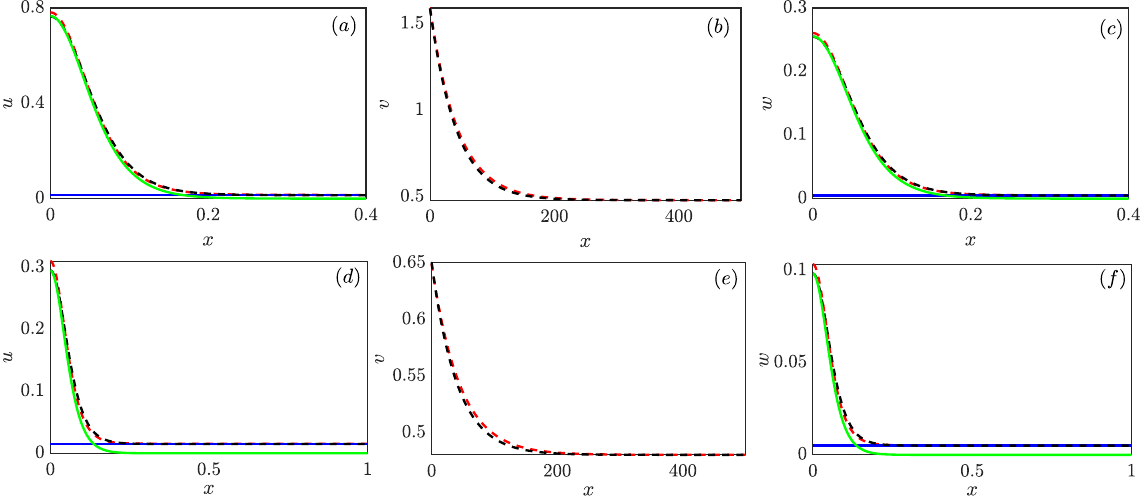}
	\caption{ A comparison of the approximation solution which obtain by the semi-strong analysis with the full numerical solution from AUTO for the upper and lower branch for $\delta_1 = 0.001$, $a = 0.014$, $b = 0.0005$, $c = 3$ and $L=1000$. (a)-(c): The solution of $u$, $v$, and $w$ for the stable branch, respectively, with $v_{0+}=1.47$. The green and blue solid lines present the inner and the outer solution for $u$ and $w$ respectively and the red (black) dashed line indicates the  asymptotic (numerical)  solution. (d)-(f): The approximate and the numerical solution of $u$, $v$ and $w$ in the unstable branch is shown  with $v_{0-}=0.69$ respectively where the colour code is similar to that in the stable branch.}\label{fig:hetroHome}
\end{figure}

\section{Stability of spike equilibrium: Large eigenvalues}\label{Sec3}
In this section we study the linear stability of the one-spike equilibrium obtained in the previous section.  We will consider the ``extended" system obtained by adding two time-scaling parameters, $\theta$ and $\tau$, into system (\ref{Gm}), which takes the form
\begin{subequations}\label{sec3_Gm}
\begin{alignat}{2}
\frac{\partial u}{\partial t} & = a-u+\frac{u^3}{w v}+\delta_1 \frac{\partial^2 u}{\partial x^2}, \,\,  x \in (-l,l), t>0, \label{sec3_Gm1}\\
%
\theta \frac{\partial v}{\partial t} & = u^2-b v+D_v\frac{\partial^2 v}{\partial x^2},\,\,  x\in (-l,l), t>0, \label{sec3_Gm2}\\
%
\tau \frac{\partial w}{\partial t} & = u -cw +\delta_2 \frac{\partial^2 w}{\partial x^2},  \,\,  x \in (-l,l), t>0,  \label{sec3_Gm3}\\
\intertext{with boundary conditions}
	& \left. u_x\right|_{\pm l} = \left. v_x\right|_{\pm l} = 
	  \left. w_x\right|_{x=\pm l} =0.  \nonumber
\end{alignat}
\end{subequations}

We are particularly interested in the new dynamics introduced by the additional variable $w$, in contrast to previous studies of spike dynamics in two-component RD systems \cite{al2024localized}, where only one type of spike oscillation was observed. In this three-component setting, we demonstrate that two distinct types of spike oscillation can arise. The first corresponds to oscillations in the spike amplitude, associated with the destabilization of a ``large'' eigenvalue, and occurs when $\theta$ increases beyond a critical threshold. The second corresponds to oscillations in the spike position, triggered when a ``small'' eigenvalue becomes unstable. This latter instability is induced by the slower timescale introduced through the variable $w$, and emerges as $\tau$ becomes sufficiently large. These results reveal a richer variety of instabilities than in the classical case of two components, leading to more complex dynamics and even the possibility of chaotic behavior.

In this section, we carry out the large eigenvalue analysis, in which the localized spike equilibrium, denoted by $u_e(x), v_e(x)$ and $w_e(x)$, takes the form given in Sec. \ref{Sec2}. Since $v_e$ is available in closed form only in the regime $a\ll 1$ as shown in Sec. \ref{Sec2: a small}, we have for arbitrary parameter values of $a$ that
\begin{equation}\label{sec3_ss_u,w}
u_e\sim \frac{V_{0}}{c\sqrt{\delta_1}}\left(w_c\left(\frac{x}{\sqrt{\delta_1}}\right)+\gamma\right), w_e=\frac{u_e}{c}.
\end{equation}

By introducing a perturbation around the steady state as
\begin{equation} \label{sec3_perturbations}
    u(x,t)=u_e(x)+\phi(x)e^{\lambda t}, v(x,t)=v_e(x)+\psi(x)e^{\lambda t}, w(x,t)=w_e(x)+\xi(x)e^{\lambda t},
\end{equation}
we obtain the following linearized eigenvalue problem:
\begin{subequations}\label{sec3_3comp-EP}
	\begin{align}
    \lambda \phi &= \delta_1 \phi_{xx} -\phi+ \frac{3u_e^2}{w_e v_e}\phi-\frac{u_e^3}{w_ev_e^2}\psi-\frac{u_e^3}{w_e^2v_e}\xi,\label{3comp-EP_1}\\
    \theta \lambda \psi &=D_v \psi_{xx}-b\psi+2u_e\phi,\label{3comp-EP_2}\\
	\tau \lambda\xi &=\delta_2\xi_{xx}-c\xi+\phi\label{3comp-EP_3}.
	\end{align}
\end{subequations} 
As $\delta_2=\delta_1^2\ll 1$, we get from (\ref{3comp-EP_3}) that 
\begin{equation}\label{sec3_xi}
    \xi=\frac{\phi}{\tau \lambda+c},
\end{equation}
and then (\ref{sec3_3comp-EP}) reduces to 
\begin{subequations}\label{sec3_2comp-EP}
	\begin{align}
    \lambda \phi &= \delta_1 \phi_{xx} -\phi+ \left(2+\frac{\tau\lambda}{\tau \lambda+c}\right)\frac{cu_e}{v_e}\phi-c\frac{u_e^2}{v_e^2}\psi,\label{2comp-EP1}\\
    \theta \lambda \psi &=D_v \psi_{xx}-b\psi+2u_e\phi.\label{2comp-EP2}
	\end{align}
\end{subequations} 
 Introducing the inner variable $x=\sqrt{\delta_1}y$, and using the fact that $\frac{u_e}{v_e}=\frac{w_c+\gamma}{c}$ in the inner region, we obtain $\psi\sim \psi_0$, where $\psi_0$ is a constant to be determined. Then (\ref{2comp-EP1}) reduces to
\begin{equation} \label{sec3_inner_phi}
    \lambda \left(1-\frac{\tau}{c+\tau\lambda}\gamma\right) \phi =  \phi_{yy} -(1-2\gamma)\phi+ \left(2+\frac{\tau\lambda}{c+\tau \lambda}\right)w_c\phi-\frac{(w_c+\gamma)^2}{c}\psi_0.
\end{equation}
To determine the value of $\psi_0$, we consider the outer region, where $\phi$ is localized and can be approximated as a delta function. Therefore, $\psi$ satisfies 
\begin{align}\label{sec3_outer_psi}
    &D_v\psi_{xx}-(b+\theta\lambda)\psi= A \delta(x;0), \quad \psi_x(\pm l)=0, \quad \text{where} \\
    &A=-2\int_{0^-}^{0^+}u_e\phi dx\sim -\frac{2V_{0}}{c}\int_{-\infty}^{\infty}w_c\phi dy.
\end{align}
This implies that 
\begin{equation}\label{sec3_psi}
    \psi =-\frac{A}{b+\theta\lambda}G(x;0),
\end{equation} where $G(x;0)$ satisfies
\begin{equation}\label{sec3_Green's eqn}
    \frac{D_v}{b+\theta \lambda}G_{xx}-G= -\delta(x;0), \quad G_x(\pm l)=0.
\end{equation}
Solving (\ref{sec3_Green's eqn}) yields
\begin{equation} \label{sec3_Green}
    G(x;0)=\frac{\sqrt{b+\theta \lambda}}{2\sqrt{D_v}}\frac{\cosh\left(\sqrt{\frac{b+\theta\lambda}{D_v}}(l-|x|)\right)}{\sinh\left(\sqrt{\frac{b+\theta\lambda}{D_v}}l\right)}.
\end{equation}
By the matching condition $\psi(0)=\psi_0$, we obtain
\begin{equation} \label{sec3_psi0}
    \psi_0=\frac{V_{0}}{c\sqrt{D_v(b+\theta\lambda)}}\frac{1}{\tanh\left(\sqrt{\frac{b+\theta\lambda}{D_v}}l\right)}\int_{-\infty}^{\infty}w_c\phi dy.
\end{equation}

Now substituting (\ref{sec3_psi0}) into (\ref{sec3_inner_phi}) leads to the following non-local eigenvalue problem (NLEP) for arbitrary parameter values of $a$:

\begin{equation}\label{sec3_NLEP_full}
    \lambda \left(1-\frac{\tau}{c+\tau\lambda}\gamma\right) \phi =  \phi_{yy} -(1-2\gamma)\phi+\left(2+\frac{\tau\lambda}{c+\tau\lambda}\right) w_c\phi -\frac{V_{0}}{c^2\sqrt{D_v}\sqrt{b+\theta\lambda}}\frac{(w_c+\gamma)^2}{\tanh\left(\sqrt{\frac{b+\theta\lambda}{D_v}}l\right)}\int_{-\infty}^{\infty}w_c\phi dy.
\end{equation}

Since the NLEP (\ref{sec3_NLEP_full}) is complicated to analyze, in this paper we will focus on the regime at the limit $a \ll 1 $, so that $\gamma\sim \frac{ac\sqrt{\delta_1}}{V_0}\ll 1$, and (\ref{sec3_NLEP_full}) reduced to
\begin{equation}\label{sec3_NLEP}
    \lambda \phi =  \phi_{yy} -\phi+\left(2+\frac{\tau\lambda}{c+\tau\lambda}\right) w_c\phi -\frac{V_{0}}{c^2\sqrt{D_v}\sqrt{b+\theta\lambda}}\frac{w_c^2}{\tanh\left(\sqrt{\frac{b+\theta\lambda}{D_v}}l\right)}\int_{-\infty}^{\infty}w_c\phi dy.
\end{equation}

To analyze the large eigenvalues in the NLEP (\ref{sec3_NLEP}), we treat $\tau$ and $\theta$ as bifurcation parameters and investigate their influence on stability. Specifically,
we consider the following three distinct cases: (1) $\tau = \theta = 0;$ (2)
$\theta > 0, \tau = 0;$ (3) $\theta = 0, \tau > 0$. The stability results in case (1) and (2) in terms of large eigenvalues exhibit a structure similar to that of the NLEP analyzed in \cite{al2024localized}. While in case (3), they extend to a novel eigenvalue problem, which leads to new stability conditions not present in the previous study.

Moreover, in contrast to the results of \cite{al2024localized}, which show that both $\theta$ and $\tau$ can induce Hopf bifurcations in the spike amplitude through large eigenvalue instabilities, we find that the instability associated with $\tau$ in the third component could not be observed in full numerical simulations. Instead, before the large eigenvalue loses stability, a different mechanism is triggered arising from small eigenvalue instabilities. This small-eigenvalue instability and its role in spike dynamics will be studied in detail in the next section.

\subsection{Case 1: \texorpdfstring{$\theta=0, \tau=0$}{theta=0, tau=0}.} \label{sec3.1:large_case1}
We begin with the simplest case by setting $\theta=0, \tau=0$, so that equation (\ref{sec3_NLEP}) reduces to the following NLEP

\begin{equation}\label{sec3_NLEP_c1}
    \lambda \phi(y)=L_0{\phi}-\frac{w_c^2}{A}\int_{-\infty}^{\infty}w_c\phi dy, \quad \text{where} \quad \frac{1}{A}=\frac{V_{0}}{c^2\sqrt{bD_v}}\frac{1}{\tanh\left(\sqrt{\frac{b}{D_v}}l\right)},
\end{equation}
in which $L_0\phi\equiv \phi_{yy}-\phi+2w_c\phi$.  This well-known NLEP was first studied in \cite{wei1999single}. It has the following basic result:

\begin{thm}\label{thm1} (See \cite{wei1999single})
Consider problem (\ref{sec3_NLEP_c1}), let $\lambda$ be an eigenvalue with the largest real part that corresponds to an eigenfunction $\phi$.

\begin{enumerate}
\item If $A>6$, then there exists $\lambda$ with $\lambda>0$.
\item If $A<6$, then either $Re(\lambda)<0$ or $\lambda=0$ with the corresponding eigenfunction $\phi=u_c'(y)$.
\item If $A=6$, then $\lambda=0$ with $\phi=u_c$.
\end{enumerate}
\end{thm}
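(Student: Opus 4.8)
The plan is to collapse the differential NLEP (\ref{sec3_NLEP_c1}) to a single scalar equation in $\lambda$ and then analyze that equation on the closed right half-plane $\mathrm{Re}\,\lambda\ge 0$. The starting point is the algebraic identity $L_0 w_c=w_c^2$, immediate from the profile equation $w_{cyy}-w_c+w_c^2=0$ (the $\gamma\to 0$ limit of (\ref{sec2_w_eqn})), together with the explicit spectrum of the self-adjoint operator $L_0=\partial_{yy}-1+2w_c$: it has a single positive eigenvalue $\mu_1=5/4$ with positive even eigenfunction $\Phi_1$, the translation eigenvalue $0$ with odd eigenfunction $w_c'$ (the $u_c'$ of the statement), and all remaining spectrum in $(-\infty,-3/4]$. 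For an eigenpair $(\lambda,\phi)$ of (\ref{sec3_NLEP_c1}): if $\int_{-\infty}^{\infty}w_c\phi\,dy=0$ then $L_0\phi=\lambda\phi$, so $\lambda\in\mathrm{spec}(L_0)$; since $\langle w_c,\Phi_1\rangle\ne 0$ the value $\lambda=5/4$ is impossible, leaving $\lambda=0$ with $\phi=w_c'$ as the only such eigenvalue with $\mathrm{Re}\,\lambda\ge 0$. Otherwise, normalizing $\int_{-\infty}^{\infty}w_c\phi\,dy=1$ and inverting, $\phi=A^{-1}(L_0-\lambda)^{-1}w_c^2$ together with the solvability condition
\begin{equation}\label{plan_g}
A=g(\lambda)\equiv\int_{-\infty}^{\infty}w_c\,(L_0-\lambda)^{-1}w_c^2\,dy,\qquad \lambda\notin\mathrm{spec}(L_0).
\end{equation}

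Next I would study $g$ on the real axis. Using $w_c^2=L_0 w_c$ and $w_c\perp\ker L_0$ one rewrites (\ref{plan_g}) as $g(\lambda)=\|w_c\|_2^2+\lambda\,h(\lambda)$ with $h(\lambda)=\int_{-\infty}^{\infty}w_c\,(L_0-\lambda)^{-1}w_c\,dy$; here $\|w_c\|_2^2=\int w_c^2\,dy=6$, and since $w_c$ is even only the even part of $\mathrm{spec}(L_0)$ enters $h$. On $(0,5/4)$ the resolvent is analytic, $h'(\lambda)=\|(L_0-\lambda)^{-1}w_c\|_2^2>0$, and---using the scaling family $w^{(a)}(y)=\tfrac32 a\,\text{sech}^2(\tfrac{\sqrt a}{2}y)$, for which $L_0\bigl((\partial_a w^{(a)})|_{a=1}\bigr)=w_c$ and $\|w^{(a)}\|_2^2=6a^{3/2}$---one finds $h(0)=\tfrac12\bigl(\tfrac{d}{da}\|w^{(a)}\|_2^2\bigr)\big|_{a=1}=\tfrac92>0$. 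Hence $h>0$ on $[0,5/4)$, so $g$ is strictly increasing there with $g(0)=6$ and $g(\lambda)\to+\infty$ as $\lambda\uparrow 5/4$; while for real $\lambda>5/4$ one has $g(\lambda)<0$. Thus: for $A>6$ the intermediate value theorem gives a root $\lambda_*\in(0,5/4)$, and $\phi_*=A^{-1}(L_0-\lambda_*)^{-1}w_c^2\not\equiv0$ is a genuine positive eigenvalue, proving item (1); for $0<A<6$ there is no real positive root and the condition $A=6$ fails at $\lambda=0$, so the only zero eigenfunction is $w_c'$, the $\lambda=0$ alternative of item (2); and at $A=6$ the condition holds at $\lambda=0$ with $\phi=L_0^{-1}w_c^2=w_c$, which is item (3).

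The remaining and most delicate point is to exclude eigenvalues with $\mathrm{Re}\,\lambda\ge 0$ and $\mathrm{Im}\,\lambda\ne 0$ when $A<6$. For this I would pair $(L_0-\lambda)\psi=w_c^2$, where $\psi=(L_0-\lambda)^{-1}w_c^2=A\phi$, with $\bar\psi$ in $L^2$, and use self-adjointness of $L_0$ together with the identity $\int w_c^2\bar\psi\,dy=\int w_c^3\,dy+\bar\lambda\,\overline{g(\lambda)}$ to separate real and imaginary parts. At an eigenvalue $g(\lambda)=A\in\mathbb{R}$, so the imaginary-part identity reduces to $\lambda_I\|\psi\|_2^2=A\lambda_I$, whence $\|\psi\|_2^2=A$ when $\lambda_I\ne0$, and the real-part identity then gives $2\int w_c|\psi|^2\,dy=\|\psi_y\|_2^2+A+\int w_c^3\,dy+2A\lambda_R$. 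Bounding $2\int w_c|\psi|^2\le 3\|\psi\|_2^2=3A$ and discarding the nonnegative terms yields only the crude estimate $A>\tfrac{18}{5}$ (using $\int w_c^3\,dy=\tfrac{36}{5}$), which is not sharp.

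I expect this gap---excluding complex and pure-imaginary eigenvalues on the whole range $A<6$ rather than merely $A<\tfrac{18}{5}$---to be the main obstacle. Closing it needs the finer spectral structure of $L_0$, namely that $5/4$ is its \emph{only} positive eigenvalue: in the partial-fraction form $g(\lambda)=\dfrac{\langle w_c,\Phi_1\rangle\langle w_c^2,\Phi_1\rangle}{5/4-\lambda}+g_2(\lambda)$, the remainder $g_2$ (built from the component $w_c^{\perp}$ of $w_c$ orthogonal to $\Phi_1$) has all its poles on $(-\infty,-3/4]$, so for $\mathrm{Re}\,\lambda\ge 0$ its real part stays below $\|w_c^{\perp}\|_2^2$ and the sign of $\mathrm{Im}\,g_2$ is controlled; balancing this against the single $\mu_1$-pole forces $g(\lambda)\ne A$ for $0<A<6$. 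This is exactly the content of the spectral estimates in \cite{wei1999single}, which I would cite rather than reproduce.
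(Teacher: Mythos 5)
The paper does not prove this theorem; it is quoted verbatim from \cite{wei1999single}, so there is no in-paper proof to compare against. Your reduction to the scalar relation $A = g(\lambda)$ with $g(\lambda)=\int w_c(L_0-\lambda)^{-1}w_c^2\,dy$, together with $L_0w_c=w_c^2$, $g(0)=6$, monotonicity of $g$ on $(0,5/4)$ and the intermediate value theorem, is exactly the standard machinery of \cite{wei1999single,ward2003hopf}, and items (1), (3), and the real-axis half of item (2) are handled correctly. The observation that $\int w_c\phi\,dy=0$ forces $\lambda\in\mathrm{spec}(L_0)$, and that $\langle w_c,\Phi_1\rangle\neq 0$ then excludes $\lambda=5/4$ and leaves only $\lambda=0$ with $\phi=w_c'$, is also right.

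The genuine gap is the one you flag yourself: ruling out eigenvalues with $\mathrm{Re}\,\lambda\ge 0$, $\mathrm{Im}\,\lambda\neq 0$ for all $0<A<6$. Your pairing argument (with $\|\psi\|_2^2=A$, the bound $2\int w_c|\psi|^2\le 3A$ from $\max w_c=3/2$, and dropping nonnegative terms) only yields $A\ge 18/5$, so the range $18/5\le A<6$ is uncovered. That range is not a fringe case; it is precisely where the theorem has content beyond the crude energy estimate, and it is the heart of Wei's proof. The partial-fraction sketch you offer as a fix is not an argument as written: the claims on the sign of $\mathrm{Re}\,g_2$ and $\mathrm{Im}\,g_2$ in the closed right half-plane are unjustified, and $g_2$ is not a rational function with poles only on $(-\infty,-3/4]$ --- it also carries a branch cut from the essential spectrum $(-\infty,-1]$ of $L_0$, so ``balancing against the $\mu_1$-pole'' needs a genuine estimate rather than a heuristic. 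Since you ultimately defer this step to \cite{wei1999single} --- exactly as the paper itself does --- the proposal is honest in its scoping, but as a self-contained proof it does not close.
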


Now we determine the stability of the two branches of one-spike solution with amplitudes $V_{0-}$ and $V_{0+}$ shown in (\ref{sec2_V0_roots_a_small}).  We substitute $V_{0\pm}$ into (\ref{sec3_NLEP}) to obtain 
\begin{equation}
A(V_{0-})=\frac{c^4b\sqrt{bD_v}}{a^2\sqrt{\delta_1}}.\\
\end{equation}
Since $\delta_1\ll1$, it follows that $A(V_{0-})\gg 6$. By Theorem \ref{thm1} we  conclude that the lower branch of the one-spike steady state with $V_0=V_{0-}$ is unstable. In contrast, for the upper branch with $V_0=V_{0+}$, we have $A(V_{0+})=3<6$, so Theorem \ref{thm1} implies that this branch is stable.  This stability result also explains Figure \ref{fig:GM_V0_newton} and Figure \ref{fig:GM_V0}, where the full simulations agree with the stable branch $V_{0+}$.

\subsection{Case 2: \texorpdfstring{$\theta>0, \tau=0$}{theta>0, tau=0}.} \label{sec3.2:large_case2}
When choosing $\theta>0$ and $\tau=0$, the nonlocal term of (\ref{sec3_NLEP}), with $V_0=V_{0-}$ takes the form
\[A(V_{0-})=\frac{bc^4\sqrt{D_v(b+\theta\lambda)}}{a^2\sqrt{\delta_1}}\frac{\tanh\left(l\sqrt{(\theta\lambda+b)/D_v}\right)}{\tanh\left(l\sqrt{b/D_v} \right)}\gg 1 >6, \quad \text{as} \quad \delta_1 \to 0.\]
Therefore, according to Theorem \ref{thm1}, the lower branch of the one-spike steady state with  $V_0 = V_{0-}$ is unstable.

Now we focus on the stability of the branch of solutions with $V_0=V_{0+}$, and we are interested to investigate how relaxing $\theta$ can destabilize the one-spike equilibrium. With $V_0=V_{0+}$, the NLEP (\ref{sec3_NLEP}) simplifies to
\begin{equation}\label{sec3_NLEP_c2}
\lambda \phi=L_0 \phi-w_c^2\frac{\int w_c \phi  dy}{A(\lambda;\theta)}, \quad\text{where} \quad A(\lambda;\theta)=3\sqrt{1+\frac{\theta \lambda}{b}} \frac{\tanh\left(l\sqrt{(\theta\lambda+b)/D_v}\right)}{\tanh\left(l\sqrt{b/D_v} \right)}.\\
\end{equation}

Equation (\ref{sec3_NLEP_c2}) has a structure similar to the NLEP studied in \cite{alsaadi_2022} and [2023 royal A]. Here we use the same idea to derive the stability result  of (\ref{sec3_NLEP_c2}).   First, we rewrite (\ref{sec3_NLEP_c2}) in the following form
\begin{equation*}
(L_0-\lambda)\phi =w_c^2, \qquad \mathrm{where} \quad
\int w_c \phi \: dy=A(\lambda;\theta)\,, 
\end{equation*}
or 
\begin{equation}\label{sec3_f}
f(\lambda):=\int w_c(L_0-\lambda)^{-1}w_c^2 \: dy =A(\lambda;\theta).
\end{equation}
 The global behavior of the same $f(\lambda)$ was studied in \cite{ward2003hopf}, from which we obtain the following basic results:
 \begin{thm}\label{thm2} (See \cite{ward2003hopf})
$f(\lambda)$ has the behavior
\[f(0)=6, f'(\lambda)>0, f''(\lambda)>0, \lambda\in (0,\frac{5}{4}).\]
Moreover, $f(\lambda)$ has a singularity at $\lambda=\frac{5}{4}$ with $f(\lambda)\to \pm\infty$ as $\lambda\to \frac{5}{4}_{\pm}$. For $\lambda>\frac{5}{4}$, we have $f(\lambda)<0$ and $f(\lambda)\to 0$ as $\lambda\to \infty$.
\end{thm}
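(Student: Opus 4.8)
The plan is to reduce everything to two elementary identities for the local operator $L_0\phi = \phi_{yy} - \phi + 2w_c\phi$ (here $w_c = \tfrac32\,\mathrm{sech}^2(y/2)$ is the $\gamma\to 0$ limit of \eqref{sec2_w}) together with the spectral theorem for $L_0$ on $L^2(\mathbb{R})$. First I would record the classical (explicitly solvable, P\"oschl--Teller) spectral picture: $L_0$ is self-adjoint with a simple largest eigenvalue $\nu_0 = \tfrac54$ having a positive even eigenfunction $\Phi_0$, a second simple eigenvalue $\nu_1 = 0$ with odd eigenfunction $\propto w_c'$, and all remaining spectrum in $(-\infty,-\tfrac34]$. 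Writing $c_n = \langle w_c,\Phi_n\rangle$ (and the analogous density for the continuous part), parity forces $c_1 = 0$ and positivity forces $c_0 \neq 0$. The key algebraic identity is $L_0 w_c = w_c^2$, immediate from $w_c'' = w_c - w_c^2$; it lets me write
\[
f(\lambda) = \langle w_c, (L_0-\lambda)^{-1}L_0 w_c\rangle = \|w_c\|_2^2 + \lambda\,\langle w_c, (L_0-\lambda)^{-1}w_c\rangle =: 6 + \lambda\, g(\lambda),
\]
using $\int w_c^2\,dy = 6$, which already gives $f(0)=6$. Expanding spectrally, $g(\lambda) = \sum_n c_n^2/(\nu_n-\lambda)$ and $f(\lambda) = \sum_n \nu_n c_n^2/(\nu_n-\lambda)$, with the two sum rules $\sum_n c_n^2 = \|w_c\|_2^2 = 6$ and $\sum_n \nu_n c_n^2 = \langle w_c, L_0 w_c\rangle = \int w_c^3\,dy > 0$.

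On $(0,\tfrac54)$, since $c_1 = 0$ the apparent pole of $g$ at $\lambda=0$ is removable, so $f$ is smooth on a neighborhood of $[0,\tfrac54)$. Differentiating term by term gives $f''(\lambda) = 2\sum_n \nu_n c_n^2(\nu_n-\lambda)^{-3}$, and for $0<\lambda<\tfrac54$ every summand is $\ge 0$: the $n=0$ term because $\nu_0>0$ and $\nu_0-\lambda>0$, and every term with $\nu_n\le 0$ because then $\nu_n\le 0$ while $(\nu_n-\lambda)^3 < 0$; the $n=0$ term is strictly positive, so $f''>0$. For $f'$ I would exploit convexity rather than fight the indefinite sign of the series for $f'$: since $f''>0$ on $[0,\tfrac54)$, $f'$ is increasing there, so it suffices that $f'(0) = g(0) > 0$. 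Using the closed form $L_0^{-1}w_c = w_c + \tfrac{y}{2}w_c'$ (verified by direct substitution, or by differentiating the rescaled family $w_\mu(y) = \mu\,w_c(\sqrt{\mu}\,y)$ at $\mu=1$), one computes $g(0) = \int w_c^2\,dy + \tfrac14\int y(w_c^2)'\,dy = 6 - \tfrac32 = \tfrac92 > 0$. Hence $f'(\lambda) > f'(0) > 0$ on $(0,\tfrac54)$.

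For the behavior at and beyond $\lambda=\tfrac54$, the only spectral point of $L_0$ in $[0,\infty)$ is $\nu_0 = \tfrac54$, so near it $g(\lambda) = c_0^2/(\tfrac54-\lambda) + (\text{function analytic at }\tfrac54)$ with $c_0^2>0$, giving $g\to\pm\infty$ and hence $f = 6+\lambda g \to \pm\infty$ as $\lambda\to\tfrac54^{\mp}$. On $(\tfrac54,\infty)$ the resolvent bound $\|(L_0-\lambda)^{-1}\| = (\lambda-\tfrac54)^{-1}$ gives $|f(\lambda)| \le \|w_c\|_2\,\|w_c^2\|_2\,(\lambda-\tfrac54)^{-1}\to 0$ as $\lambda\to\infty$. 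To fix the sign I would show $f$ is increasing on $(\tfrac54,\infty)$: writing $f'(\lambda) = \nu_0 c_0^2(\lambda-\nu_0)^{-2} - \sum_{\nu_n<0}|\nu_n|c_n^2(\lambda+|\nu_n|)^{-2}$ and using $\lambda+|\nu_n| > \lambda-\nu_0 > 0$ for every $\nu_n<0$, one gets $f'(\lambda) > (\lambda-\nu_0)^{-2}\big(\nu_0 c_0^2 - \sum_{\nu_n<0}|\nu_n|c_n^2\big) = (\lambda-\nu_0)^{-2}\sum_n \nu_n c_n^2 = (\lambda-\nu_0)^{-2}\int w_c^3\,dy > 0$. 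A function strictly increasing on $(\tfrac54,\infty)$ and tending to $0$ at $+\infty$ is negative throughout, which is the final assertion.

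The main obstacle is technical bookkeeping of the continuous spectrum: the "sums" above are really integrals against the spectral measure of $L_0$ on $(-\infty,-1]$, so one must justify differentiating under that integral (legitimate since $w_c$ and $w_c^2$ decay exponentially, so the spectral densities decay and all series/integrals converge uniformly on compact subsets of the resolvent set) and note that this continuous part contributes only terms with $\nu<0$, which it manifestly does, so the term-by-term sign arguments are unaffected. An alternative closer to \cite{ward2003hopf} avoids the spectral theorem entirely by solving the ODE $(L_0-\lambda)\Psi = w_c^2$ in closed form — its bounded solution is expressible through $w_c$ and $\mathrm{sech}^{\sigma(\lambda)}(y/2)$ with associated special functions — and then checking the monotonicity and convexity claims by direct differentiation; this is heavier computationally but needs no abstract input beyond $L_0 w_c = w_c^2$ and the elementary integrals $\int w_c^2 = 6$, $\int w_c^3 > 0$.
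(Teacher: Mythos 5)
The paper offers no proof of this theorem — it simply cites \cite{ward2003hopf} — so there is nothing internal to compare against. Your argument is correct and is, in its essentials, the spectral-decomposition route used in that reference: diagonalize $L_0$, exploit the identity $L_0 w_c = w_c^2$ to write $f(\lambda)=6+\lambda\,\langle w_c,(L_0-\lambda)^{-1}w_c\rangle$, and read off monotonicity/convexity and the pole at $\nu_0=\tfrac54$ from the sign pattern of $\nu_n c_n^2 (\nu_n-\lambda)^{-k}$. Your individual steps check out: $L_0 w_c = w_c^2$ and $L_0\bigl(w_c+\tfrac{y}{2}w_c'\bigr)=w_c$ hold (the latter from differentiating $w_c''-w_c+w_c^2=0$); $\int w_c^2\,dy=6$ and $g(0)=6+\tfrac14\int y\,(w_c^2)'\,dy=\tfrac92$ are correct; and the inequality $f'(\lambda) > (\lambda-\nu_0)^{-2}\int w_c^3\,dy > 0$ on $(\tfrac54,\infty)$ is clean and tighter than a crude resolvent bound. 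Two harmless side-remarks: (i) the spectrum of $L_0$ actually contains a third bound state at $\nu_2=-\tfrac34$ plus essential spectrum $(-\infty,-1]$, so your "remaining spectrum in $(-\infty,-\tfrac34]$" is accurate but slightly elliptic; and (ii) the sign pairing in the theorem as printed, "$f(\lambda)\to\pm\infty$ as $\lambda\to\tfrac54_\pm$," is backwards relative to what you (correctly) derive and to the paper's own subsequent claim $f<0$ on $(\tfrac54,\infty)$ — it should read $\tfrac54_{\mp}$. Your argument gets the signs right.
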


The graph of $f(\lambda) $ is shown in Figure \ref{fig:f}(a). We then study the stability of the solution branch with $V_{0}=V_{0+}$. By introducing the rescaled parameter $\hat{\theta}:=\frac{\theta}{b}$, the function $A(\lambda;\theta)$ can be rewritten as

\begin{equation}\label{sec3_A2}
A(\lambda)=3\sqrt{1+\hat{\theta}\lambda}\frac{\tanh\left(l\sqrt{\frac{b}{D_v}}\sqrt{1+\hat{\theta}\lambda}\right)}{\tanh\left(l\sqrt{\frac{b}{D_v}}\right)}. \\
\end{equation}

When $\hat{\theta}$ is sufficiently large, the system can be destabilized via a Hopf bifurcation. This result was first proved in \cite{ward2003hopf}.  Although there is no closed-form expression for the Hopf threshold $\hat{\theta}_h$ at which $Re(\lambda) = 0$, this critical value can be computed numerically by discretizing the NLEP (\ref{sec3_NLEP}) using finite differences. This result is illustrated in Figure \ref{fig:f}(b), where $l, b$ and $c$ are fixed, and the above method is applied to calculate $\hat{\theta}_h$. Close agreement is observed between theoretical predictions and full numerical simulations, with errors of no more than $4.5\%$.  Moreover, we observe that as $D_v$ decreases or $l$ increases, (\ref{sec3_A2}) reduces to $A(\lambda;\theta)\sim 3\sqrt{1+\hat{\theta} \lambda}$, in which case the Hopf threshold approaches the asymptotic value $\hat{\theta}_h\to 2.7492$, as also shown in the figure.

\begin{figure}
\begin{center}
\includegraphics[width=0.45\textwidth]{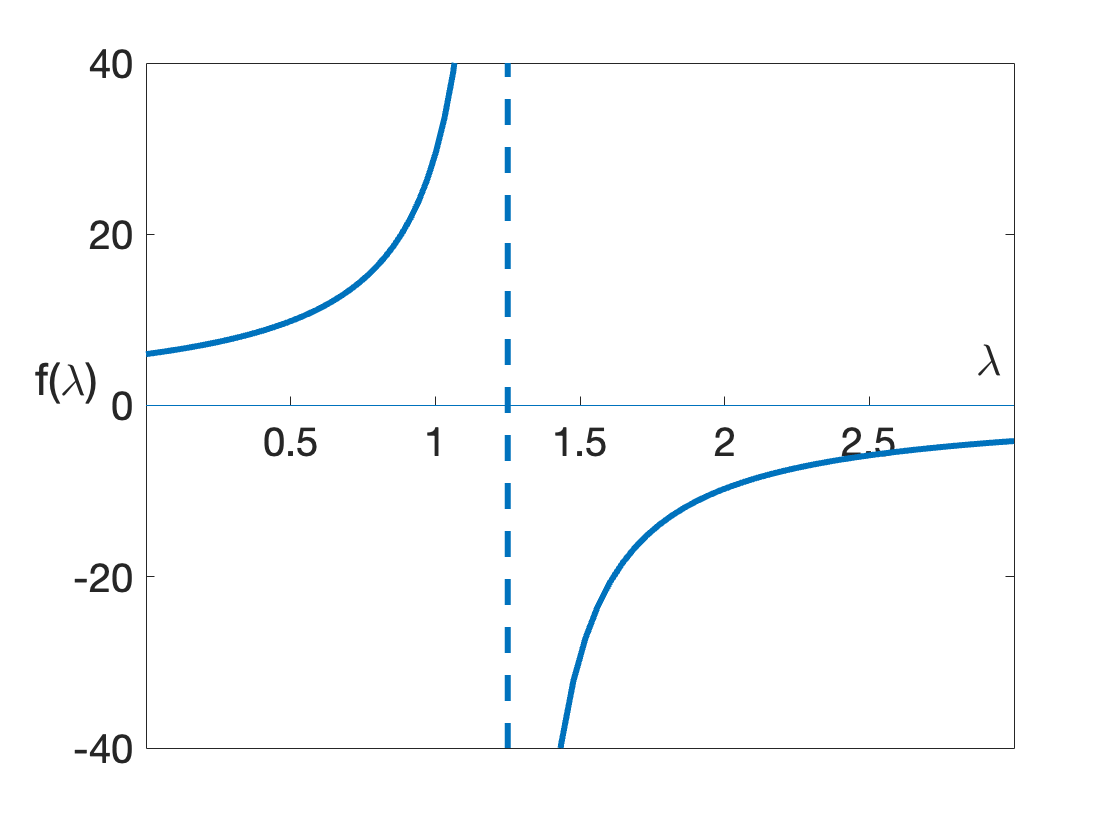} \quad
\includegraphics[width=0.45\textwidth]{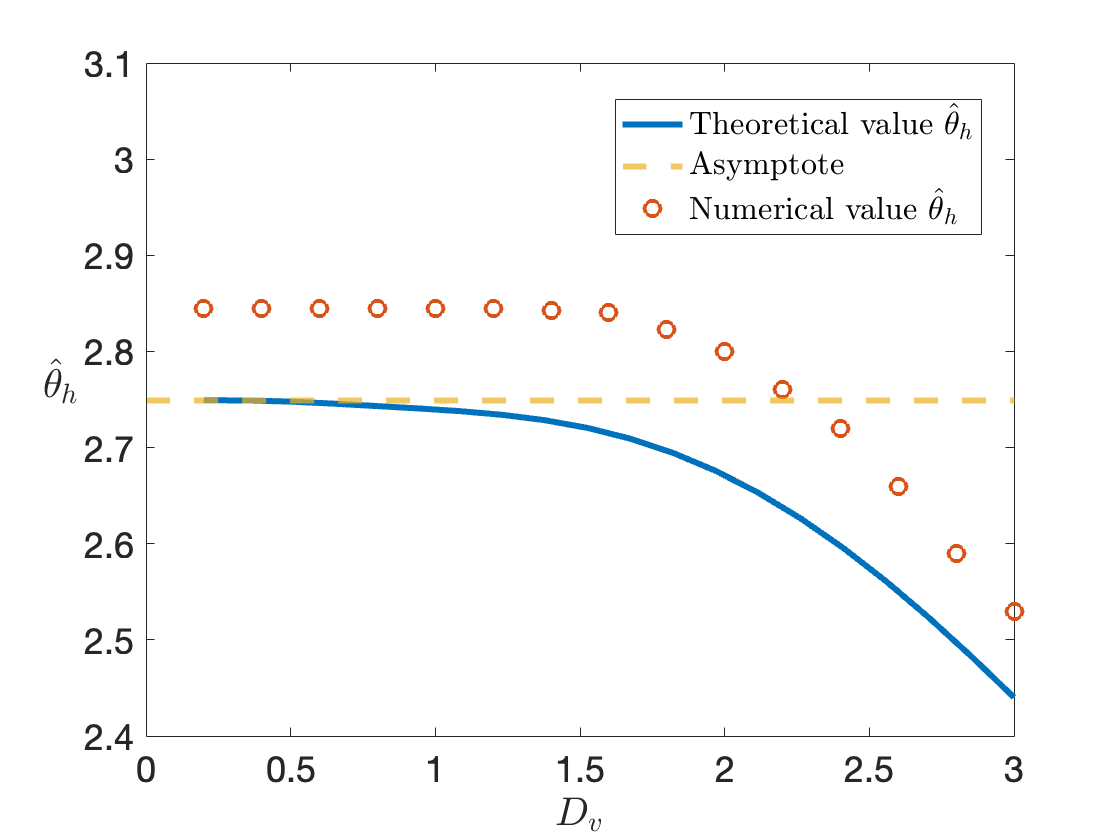}\newline
(a) ~~~~~~~~~~~~~~~~~~~~~~~~~~~~~~~~~~~~~~~~~~~~~~~~~~~~~~~~~~~~ (b)
\caption{Computational results illustrating the stability analysis; 
(a) The function $f(\lambda)$ given in  (\ref{sec3_f}) and it has a singularity at $\lambda=\frac{5}{4}$. (b) Hopf bifurcation points $\hat{\theta}_h$ against $D_v$ with $\tau=0, a=0.01, b=1, c=1, l=3$.}
\label{fig:f}
\end{center}
\end{figure}

\subsection{Case 3: \texorpdfstring{$\theta=0, \tau>0$}{theta=0, tau=0}.} \label{{sec3.3:large_case3}}

In the case where $\theta=0, \tau>0$, upon substituting $V_0=V_{0+}$, NLEP (\ref{sec3_NLEP}) reduces to
\begin{equation}\label{sec3_NLEP_c3}
\lambda \phi=  L_\lambda \phi- \frac{1}{3}w_c^2\int w_c \phi dy,\\
\end{equation}
where 
\begin{equation}\label{sec3_L_lambda}
     L_\lambda \phi= \phi_{yy}-\phi+\left(2+\frac{\tau\lambda}{c+\tau\lambda}\right)w_c\phi.
\end{equation}
Note that in this case, we obtain a new operator $L_\lambda$, with nonlinear dependence on $\lambda$ and the parameters $\tau$ and $c$. In contrast to $L_0$ defined in Sec. \ref{sec3.1:large_case1}, which has a single positive eigenvalue at $\lambda=\frac{5}{4}$, and corresponds to the singularity of $f(\lambda)$ given in (\ref{sec3_f}). Here we investigate the operator $L_\lambda$, by solving the nonlinear local eigenvalue problem 
\begin{equation}\label{sec3_L_lambda_eig}
    L_\lambda\phi=\lambda \phi.
\end{equation}
The problem can be rewritten as a P\"oschl-Teller equation, whose solutions are well studied, and we summarize the result as follows; further details are given in the Appendix.
\begin{thm}\label{sec3_thm_singularity}
    Let $\tau>0$ and $c>0$. For the nonlinear eigenvalue problem given in (\ref{sec3_L_lambda_eig}), there exists a unique positive eigenvalue  $\lambda_0$ that satisfies
    \begin{equation}\label{singularity_lambda}
        \sqrt{1+\lambda_0}=4-3\lambda_0+\frac{3\tau\lambda_0}{c+\tau\lambda_0}.
    \end{equation}
    Ordering the (real) eigenvalues in decreasing order, the \emph{next} eigenvalue is  
    \[\lambda=0,\] and the corresponding eigenfunction $\phi=w'(y)$, with $w(y)=\frac{3}{2}\text{sech}^2\left(\frac{x}{2}\right)$. There are no other nonnegative eigenvalues.
\end{thm}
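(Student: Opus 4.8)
The plan is to reduce the eigenvalue problem $L_\lambda\phi=\lambda\phi$, with $L_\lambda\phi=\phi_{yy}-\phi+(2+\tfrac{\tau\lambda}{c+\tau\lambda})w_c\phi$ and $w_c(y)=\tfrac32\,\mathrm{sech}^2(y/2)$, to a classical P\"oschl--Teller eigenvalue problem for which the bound-state spectrum is known explicitly. First I would write the eigenvalue equation as
\begin{equation*}
\phi_{yy}-(1+\lambda)\phi+\Bigl(2+\tfrac{\tau\lambda}{c+\tau\lambda}\Bigr)w_c\,\phi=0,
\end{equation*}
and substitute $w_c=\tfrac32\,\mathrm{sech}^2(y/2)$ together with the rescaling $z=y/2$, so that the equation becomes $\phi_{zz}+\bigl[\,\beta(\beta+1)\,\mathrm{sech}^2 z - 4(1+\lambda)\,\bigr]\phi=0$ where $\beta$ is defined by $\beta(\beta+1)=6\bigl(2+\tfrac{\tau\lambda}{c+\tau\lambda}\bigr)$, i.e. $\beta=\tfrac12\bigl(-1+\sqrt{1+24(2+\tfrac{\tau\lambda}{c+\tau\lambda})}\bigr)$. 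This is exactly the Schr\"odinger operator with a $\mathrm{sech}^2$ potential; its $L^2$ bound states occur precisely when $\sqrt{4(1+\lambda)}=\beta-n$ for nonnegative integers $n$ with $\beta-n>0$, and the $n$-th eigenfunction is $(\mathrm{sech}\,z)^{\beta-n}$ times a Gegenbauer polynomial of degree $n$. The key point is that $\beta$ itself depends on $\lambda$ through the nonlinear coefficient, so the condition $2\sqrt{1+\lambda}=\beta(\lambda)-n$ is a transcendental equation in $\lambda$ that must be analyzed case by case in $n$.

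Next I would treat the two relevant values $n=0$ and $n=1$. For $n=0$ the quantization condition $2\sqrt{1+\lambda}=\beta(\lambda)$ combined with $\beta(\beta+1)=6(2+\tfrac{\tau\lambda}{c+\tau\lambda})$ gives, after eliminating $\beta$ (use $\beta=2\sqrt{1+\lambda}$ so $\beta(\beta+1)=4(1+\lambda)+2\sqrt{1+\lambda}$), the relation $4(1+\lambda)+2\sqrt{1+\lambda}=12+\tfrac{6\tau\lambda}{c+\tau\lambda}$; dividing by $2$ and rearranging yields $2\sqrt{1+\lambda}=4-3\lambda+\tfrac{3\tau\lambda}{c+\tau\lambda}$, hmm — I should double-check the arithmetic against the claimed $\sqrt{1+\lambda_0}=4-3\lambda_0+\tfrac{3\tau\lambda_0}{c+\tau\lambda_0}$, which suggests the intended normalization uses $w(y)=\tfrac32\,\mathrm{sech}^2(y/2)$ directly without rescaling, i.e. works with the variable $y$ and the substitution $\phi=(\mathrm{sech}(y/2))^{s}$; in any case the bookkeeping of the $\tfrac12$ factors is routine and I would carry it through carefully to land on (\ref{singularity_lambda}). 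For $n=1$ the condition becomes $2\sqrt{1+\lambda}=\beta(\lambda)-1$; at $\lambda=0$ one checks $\beta(0)=\tfrac12(-1+\sqrt{1+144})=\tfrac12(-1+\sqrt{145})$ — wait, that is not $3$, so again the precise normalization matters: with $w=\tfrac32\mathrm{sech}^2(y/2)$ and no rescaling the relevant potential strength is $2\cdot\tfrac32=3=\beta(\beta+1)$ at $\lambda=0$ giving $\beta=\tfrac12(-1+\sqrt{13})$... so I would instead verify directly that $\phi=w'(y)$ solves $L_0\phi=0$ (a standard translation-mode identity following from differentiating the profile equation $w_{cyy}-w_c+w_c^2=0$), confirming $\lambda=0$ is an eigenvalue of $L_\lambda$ as well since $L_0=L_\lambda$ at $\lambda=0$.

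For existence and uniqueness of the positive eigenvalue $\lambda_0$ solving (\ref{singularity_lambda}), I would study $g(\lambda):=\sqrt{1+\lambda}-4+3\lambda-\tfrac{3\tau\lambda}{c+\tau\lambda}$ on $[0,\infty)$: at $\lambda=0$, $g(0)=1-4=-3<0$; as $\lambda\to\infty$, $g(\lambda)\sim 3\lambda\to+\infty$; and $g$ is strictly increasing because $\tfrac{d}{d\lambda}\tfrac{3\tau\lambda}{c+\tau\lambda}=\tfrac{3\tau c}{(c+\tau\lambda)^2}>0$ is bounded above by $\tfrac{3}{c}\cdot\tfrac{c}{1}$... more carefully, $g'(\lambda)=\tfrac1{2\sqrt{1+\lambda}}+3-\tfrac{3\tau c}{(c+\tau\lambda)^2}$, and since $\tfrac{3\tau c}{(c+\tau\lambda)^2}\le \tfrac{3\tau c}{c^2}=\tfrac{3\tau}{c}$ — this is not obviously $\le 3$, so I would argue instead that $\tfrac{3\tau\lambda}{c+\tau\lambda}$ is concave and increasing with supremum $3$, while $3\lambda$ grows linearly, so $3\lambda-\tfrac{3\tau\lambda}{c+\tau\lambda}$ is eventually increasing and, combined with monotonicity on the relevant range or a direct convexity estimate, gives a unique root; the cleanest route is to note $3\lambda-\tfrac{3\tau\lambda}{c+\tau\lambda}=\tfrac{3\tau\lambda^2}{c+\tau\lambda}\ge 0$ is manifestly nonnegative and increasing in $\lambda$, hence $g$ is strictly increasing on $[0,\infty)$, giving exactly one zero $\lambda_0>0$. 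That $\lambda_0$ is in fact an eigenvalue (not merely a formal solution of the quantization condition) follows because the ground-state eigenfunction $(\mathrm{sech}(y/2))^{\beta(\lambda_0)}$ is positive and decays, hence lies in $L^2$. Finally, to rule out other nonnegative eigenvalues I would invoke the ordering of P\"oschl--Teller bound states: the ground state $(n=0)$ is nodeless and is the one producing $\lambda_0$; the first excited state $(n=1)$ is the odd function $w'(y)$ producing $\lambda=0$; and any further state would need $\beta(\lambda)-n>0$ with $n\ge 2$, forcing $2\sqrt{1+\lambda}=\beta(\lambda)-n<\beta(0)-2$ which is negative for $\lambda\ge 0$ once one checks $\beta(0)<2$ — more robustly, I would argue via nodal counting that a nonnegative eigenvalue below the ground-state line is impossible and that between $\lambda=0$ and $\lambda_0$ there can be no third state because that would require a bound state with two interior nodes at positive energy, contradicting the finite number and ordering of $\mathrm{sech}^2$ bound states. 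The main obstacle I anticipate is precisely this last step: because $\beta$ depends on $\lambda$, the usual one-shot counting argument for a fixed P\"oschl--Teller potential does not apply verbatim, and I expect to need a careful monotonicity/continuity argument tracking how the $n$-th quantization curve $2\sqrt{1+\lambda}=\beta(\lambda)-n$ intersects the allowed region as $\lambda$ ranges over $[0,\infty)$ — this is where the Appendix details referenced in the statement will do the real work.
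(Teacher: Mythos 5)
Your reduction to the P\"oschl--Teller form via $z=y/2$, the quantization relation $2\sqrt{1+\lambda}=\beta(\lambda)-n$ with $\beta(\beta+1)=6\bigl(2+\tfrac{\tau\lambda}{c+\tau\lambda}\bigr)$, and the identification of $n=0$ with the positive eigenvalue and $n=1$ with the translation mode $\lambda=0$, $\phi=w_c'$, is exactly the route the paper takes in its Appendix. You are also right to be suspicious of the coefficient in \eqref{singularity_lambda}: carrying the arithmetic through for $n=0$ gives $\sqrt{1+\lambda_0}=4-2\lambda_0+\tfrac{3\tau\lambda_0}{c+\tau\lambda_0}$, which is what the Appendix actually records in \eqref{app_thm3}; the ``$-3\lambda_0$'' in the theorem statement appears to be a typographical error, so the discrepancy you flagged is not a bookkeeping issue you can tidy away.

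The parts of your argument that go \emph{beyond} the paper contain real errors. First, the ``cleanest route'' identity $3\lambda-\tfrac{3\tau\lambda}{c+\tau\lambda}=\tfrac{3\tau\lambda^2}{c+\tau\lambda}$ is false: the correct simplification is $\tfrac{3\lambda(c-\tau+\tau\lambda)}{c+\tau\lambda}$, which is \emph{negative} for small $\lambda>0$ whenever $\tau>c$, so $g$ is not globally monotone on $[0,\infty)$ (the same issue survives the $-2\lambda$ correction). A correct uniqueness argument must instead restrict attention to the interval where the strictly increasing left-hand side $\sqrt{1+\lambda}+2\lambda-4$ lies in $[0,3)$ --- any root necessarily lies there since the right-hand side $\tfrac{3\tau\lambda}{c+\tau\lambda}\in[0,3)$ --- and then establish monotonicity of the difference only on that bounded interval, where the derivative term $\tfrac{3\tau c}{(c+\tau\lambda)^2}$ can be controlled. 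Second, you miscompute $\beta(0)$: at $\lambda=0$ one has $\beta(\beta+1)=12$, so $\beta(0)=\tfrac12(-1+\sqrt{49})=3$, not $\tfrac12(-1+\sqrt{145})$; the translation mode then falls out cleanly from the $n=1$ condition, and your fallback of verifying $L_0 w_c'=0$ directly is fine but unnecessary. Finally, your acknowledgment that ruling out $n\ge2$ needs a genuine continuity/monotonicity argument tracking the $\lambda$-dependent quantization curves is well placed, but be aware the paper's Appendix does not supply such an argument either --- it simply asserts that ``$\lambda$ decreases with the mode index $n$'' without accounting for the $\lambda$-dependence of $\beta$, so this remains an unaddressed gap in both your write-up and the paper's.
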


Now we investigate the nonlocal eigenvalue problem (\ref{sec3_NLEP_c3}) by rewriting it in the following form,
\begin{equation*}
(L_\lambda-\lambda)\phi =w_c^2, \qquad \mathrm{where} \quad
\int w_c \phi \: dy=3\,, 
\end{equation*}
or 
\begin{equation}\label{sec3_g}
g(\lambda;c, \tau):=\int w_c(L_\lambda-\lambda)^{-1}w_c^2 \: dy =3.
\end{equation}
As $L_\lambda$ has a unique positive eigenvalue $\lambda=\lambda_0$, which can be solved via (\ref{singularity_lambda}), the global behavior of $g(\lambda; c, \tau)$ has a structure similar to $f(\lambda)$, but with singularity now depends on $\tau$ and $c$.  In Figure \ref{fig:g} we plot $g(\lambda;c, \tau)$ for fixed $c=0.5$. The right panel shows the singularity curve as $\tau$ increases, in contrast with theorem \ref{thm1}, where the singularity point is fixed.  In particular, as $\tau \to \infty$, the operator approaches $L_\lambda \to \phi_{yy}-\phi+3w_c\phi$, which (see Appendix) has a unique positive eigenvalue $\lambda_\infty=2.5643$. This value is shown as a horizontal asymptote in the figure.

\begin{figure}
\begin{center}
\includegraphics[width=0.45\textwidth]{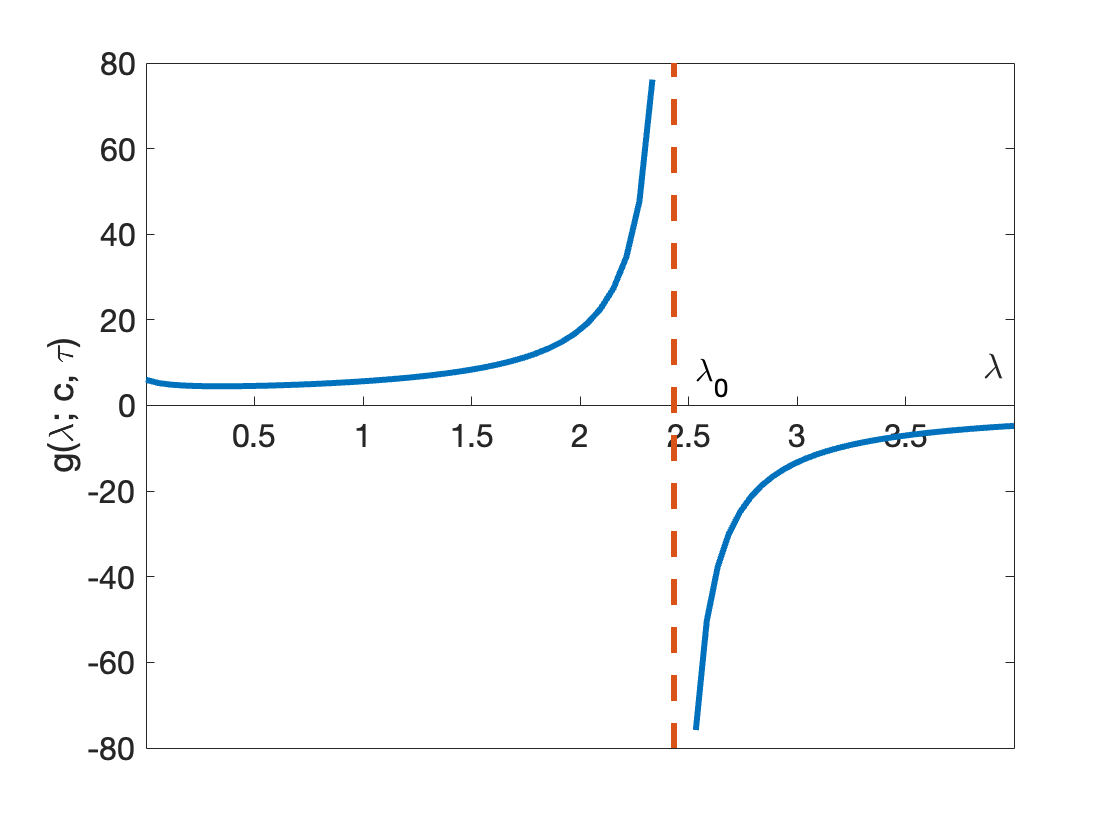} \quad
\includegraphics[width=0.45\textwidth]{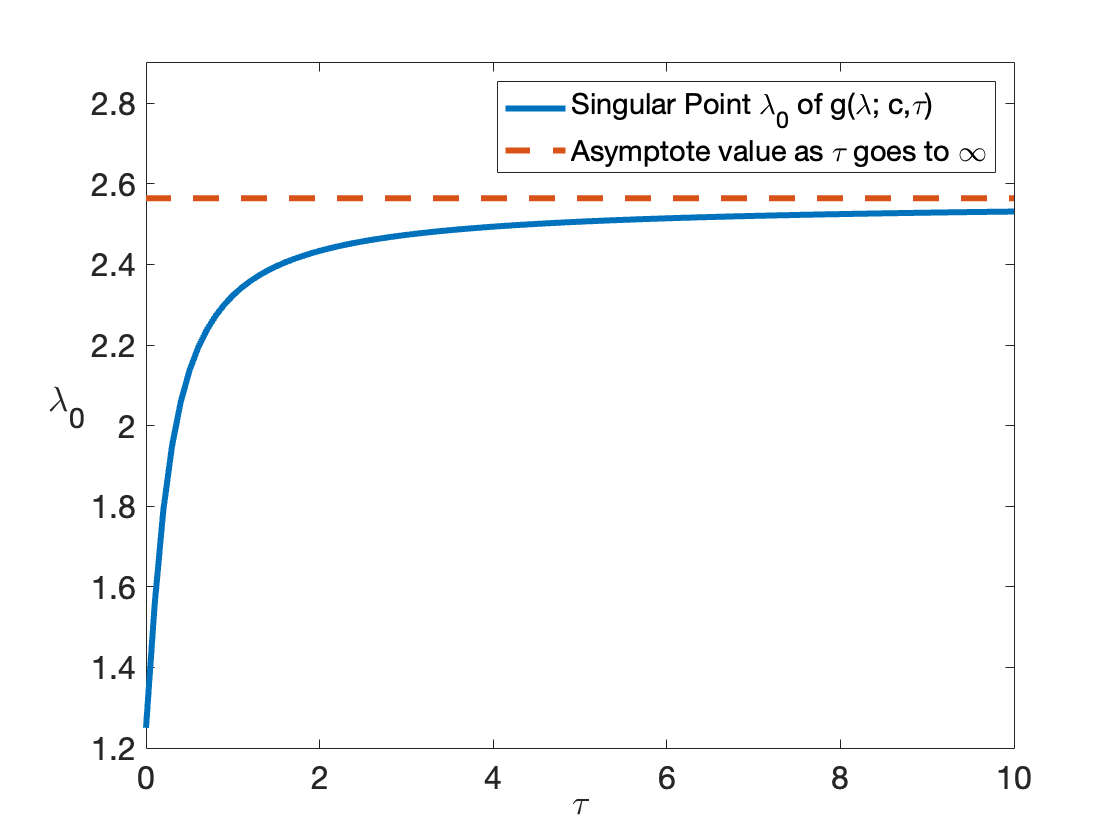}\newline
(a) ~~~~~~~~~~~~~~~~~~~~~~~~~~~~~~~~~~~~~~~~~~~~~~~~~~~~~~~~~~~~ (b)
\caption{Computational results illustrating the global behavior of $g(\lambda; c, \tau)$; 
(a) Function plot of $g(\lambda; c, \tau)$ given in  (\ref{sec3_g}). With $c=0.5$ it has a singularity at $\lambda=\lambda_0\sim 2.43$. (b) Plot of singular point $\lambda_0$ vs. $\tau$, at which $g(\lambda; c, \tau)$ blows up.}
\label{fig:g}
\end{center}
\end{figure}

\begin{figure}
\begin{center}
\includegraphics[width=0.48\textwidth]{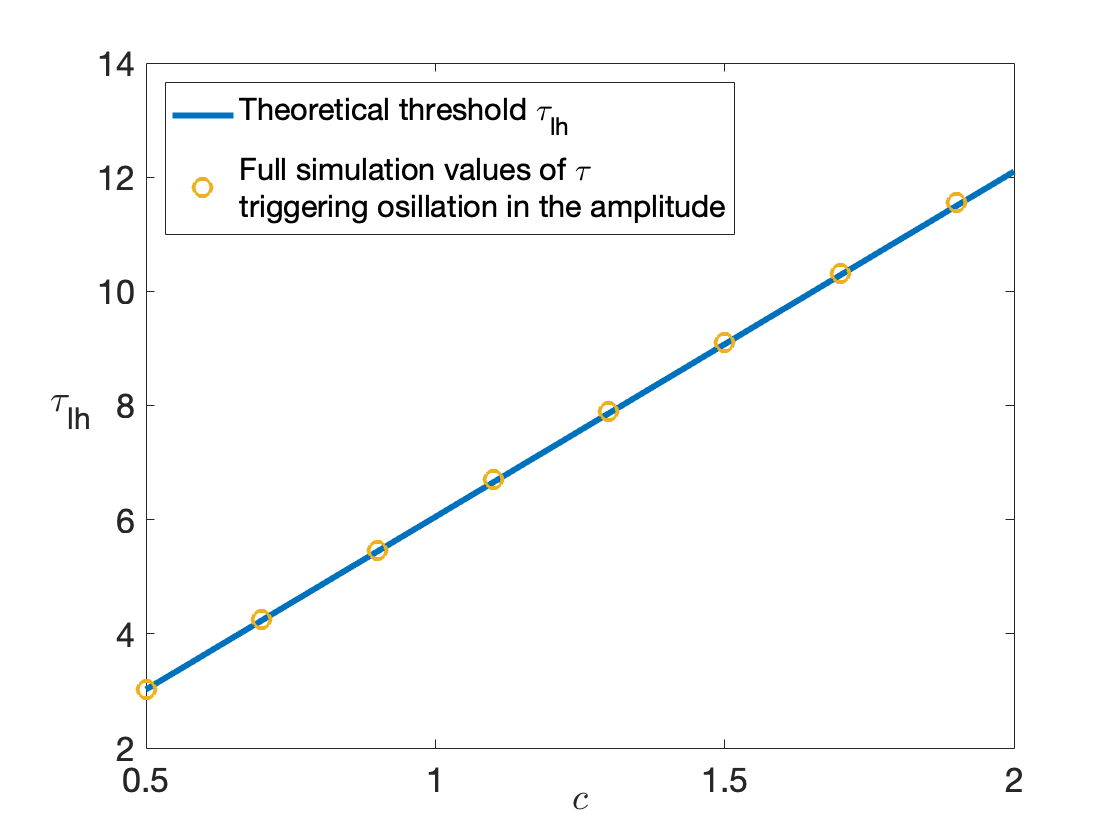} \quad
\includegraphics[width=0.38\textwidth]{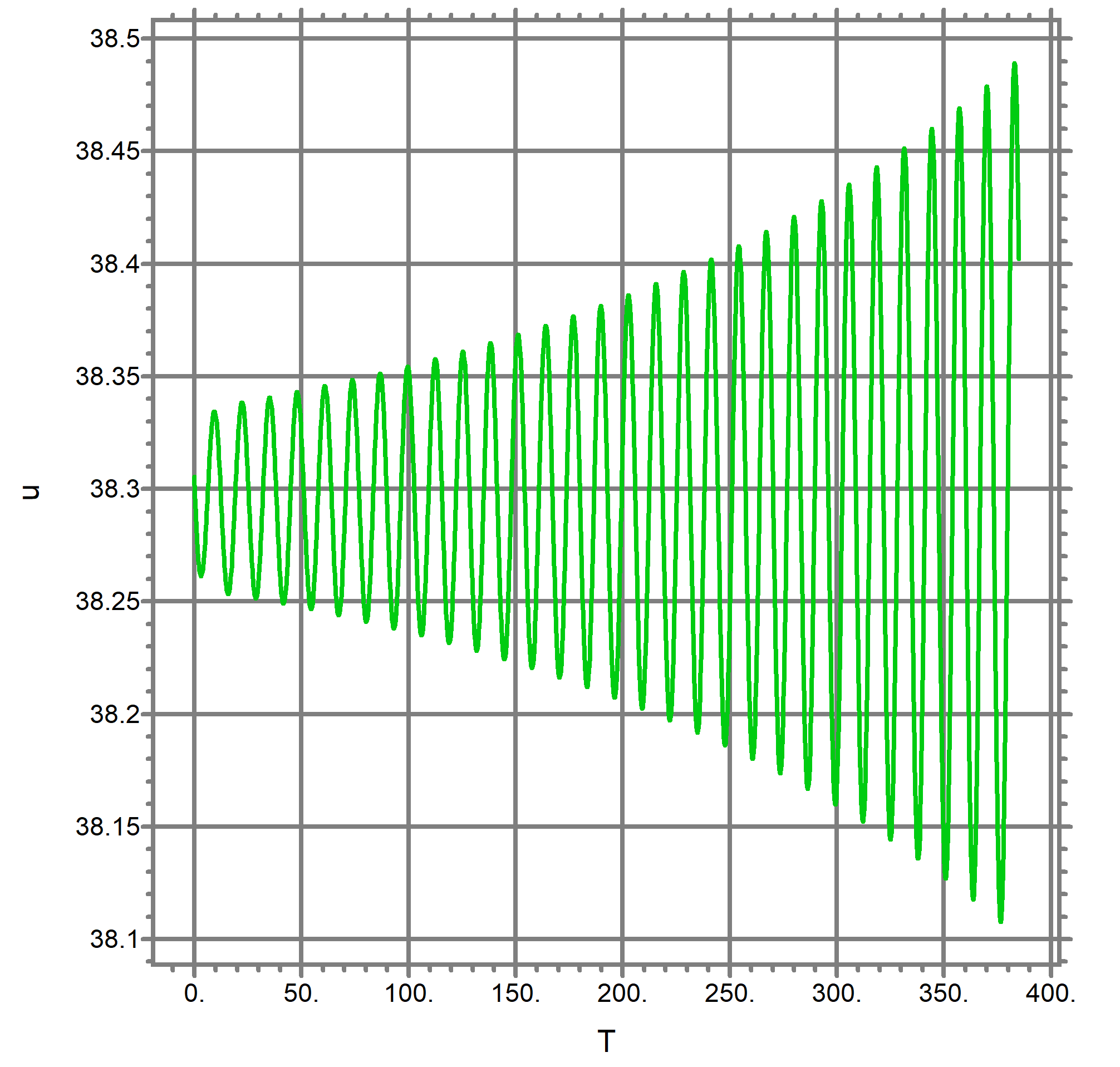}\newline
(a) ~~~~~~~~~~~~~~~~~~~~~~~~~~~~~~~~~~~~~~~~~~~~~~~~~~~~~~~~~~~~ (b)
\caption{(a) Comparison between asymptotic and numerical results for the large eigenvalue threshold $\tau_{lh}$ as parameter $c$ varies. The solid curve is the analytical result solving equation \ref{sec3_g} numerically. The circles denote full simulation results, obtained by recording the point where the boundary half-spike for the GM model \eqref{sec3_Gm} first undergoes a Hopf bifurcation. Parameters: $D_v=1, \theta=0, \delta_1 = 0.01^2, a=0.01,  b=1, \theta=0$ and $l = 1$;
(b) The plot of the spike amplitude at the boundary for $\tau$ just above the threshold $\tau_{lh}$. Here $c=1$, and $\tau=6.2$, slightly above $\tau_{lh}=6.05$.  The other parameters are as panel (a).}
\label{fig:large_tau_threshold}
\end{center}
\end{figure}

When ${\tau}$ is sufficiently large, the single spike steady state loses stability through a Hopf bifurcation at $\tau=\tau_{lh}$. Although no closed-form expression is available for the Hopf threshold $\tau_{lh}$ at which $Re(\lambda) = 0$, we apply similar methods as in Sec. \ref{sec3.2:large_case2} and compute this critical value numerically by discretizing the NLEP (\ref{sec3_NLEP_c3}) using finite differences. The predicted threshold shows excellent agreement with the full time-dependent simulations, as shown in Figure \ref{fig:large_tau_threshold} (a). In the panel (b), we plot the time series of the spike amplitude at the boundary for $\tau$ just above the threshold $\tau_{lh}$.  We also notice that the result $\tau_{lh}$ appears linearly on the parameter $c.$

Note that in the full time-dependent simulations to verify the large-eigenvalue Hopf threshold $\tau_{lh}$ for a single interior spike, simply increasing $\tau$on the symmetric domain $[-l,l]$ does not show the expected oscillations in spike amplitude (as would be associated with a large-eigenvalue Hopf mode).  Instead, we observe oscillations in spike position, indicating that a small-eigenvalue (translational) instability intervenes first. This effect will be analyzed in the next section. Here, to suppress this effect and isolate the large-eigenvalue mechanism, we simulate on the half-domain $[0,l]$ with Neumann boundary conditions, so that a boundary spike at
$x=0$ represents the even extension of an interior spike and the odd translational mode is removed. In this setting, time-dependent PDE simulations performed in FlexPDE yield the threshold shown in \ref{fig:large_tau_threshold}. Moreover, a color plot of the spatiotemporal evolution of the half-spike amplitude as $\tau$ passes $\tau_{lh}$ is shown in Figure \ref{fig:leig_tau10}.

\begin{figure}
\begin{center}
\includegraphics[width=0.48\textwidth]{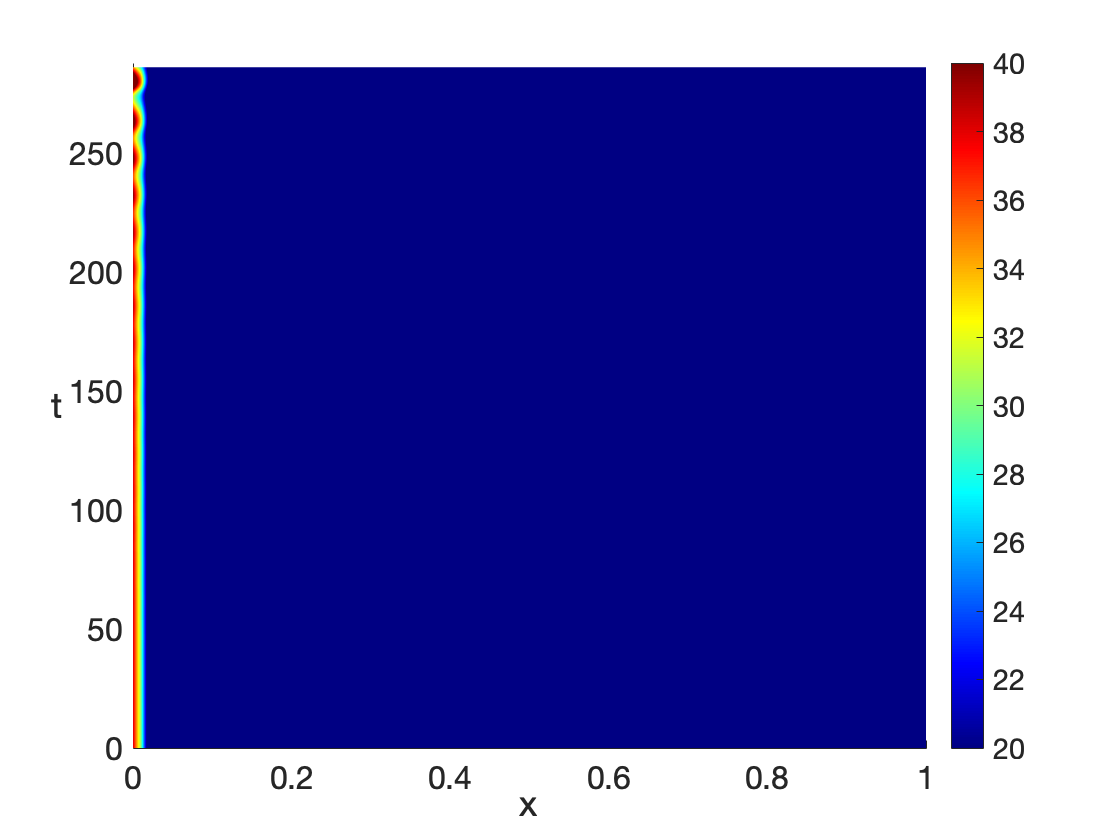} \quad
\includegraphics[width=0.4\textwidth]{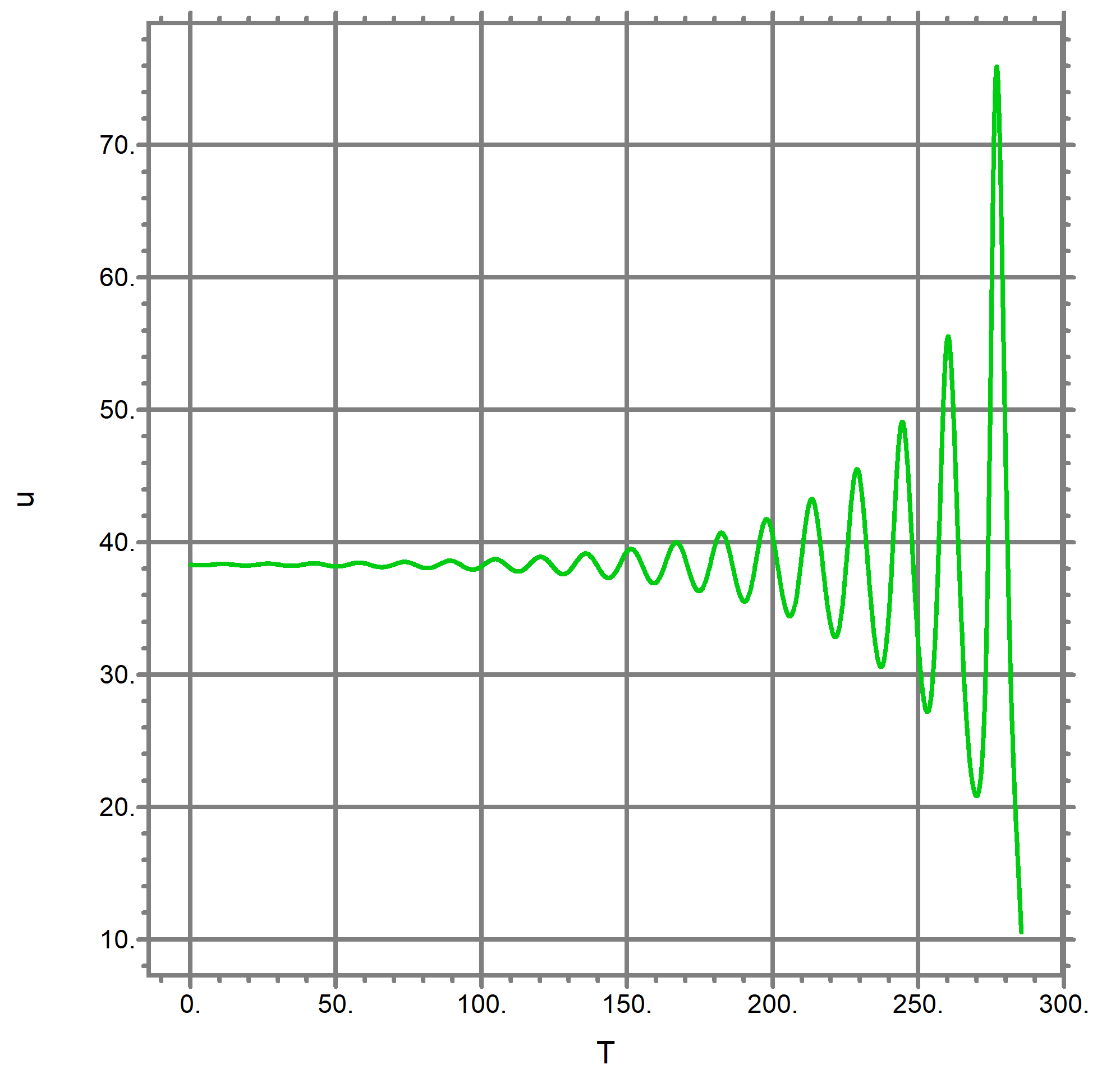}\newline
(a) ~~~~~~~~~~~~~~~~~~~~~~~~~~~~~~~~~~~~~~~~~~~~~~~~~~~~~~~~~~~~ (b)
\caption{Full simulations in FlexPDE illustrating the dynamics of a half-spike under large-eigenvalue instability for $\tau =10>\tau_h=6.05$; (a)For $\tau>\tau_h$, the boundary spike at $x=0$ becomes destabilized by a large-eigenvalue mode and exhibits oscillations in amplitude..; 
(b) The corresponding time series of the spike amplitude at the boundary $x=0$. Other parameters: $\delta_1=0.01^2, \theta=0, D_2=1, a=0.01, b=1; c=1.$}
\label{fig:leig_tau10}
\end{center}
\end{figure}


\section{Stability of spike equilibrium with  \texorpdfstring{$a\ll 1$}{a much less than 1}: Small eigenvalues} \label{Sec4}
In this section, we investigate the stability of the one-spike solution with respect to small eigenvalues.  We will show that the parameter $\tau$ can trigger a small-eigenvalue instability, leading to a Hopf bifurcation associated with oscillatory spike motion. Numerical evidence suggests that this bifurcation is supercritical, giving rise to stable, time-periodic spike dynamics. For simplicity, we will focus on the regime where $\theta=0, \tau>0$ and begin our analysis from the eigenvalue problem (\ref{sec3_3comp-EP}). 

To distinguish the notation from that used in the large-eigenvalue analysis, in terms of $y=\frac{x}{\sqrt{\delta_1}}$, we introduce the following inner variables for the one-spike equilibrium and  perturbations: 
\begin{subequations}\label{sec4_small_eig_var}
\begin{align}
    u_e(x)&=U_e(y), v_e(x)=V_e(y), w_e(x)=W_e(y), \\
    \phi(x)&=\Phi(y), \psi(x)=\Psi(y), \xi(x)=\Xi(y).
\end{align}
\end{subequations}
Then (\ref{sec3_3comp-EP}) becomes
\begin{subequations}\label{sec4_small_3EP}
	\begin{align}
    \lambda \Phi &=  \Phi_{yy} -\Phi+ \frac{3U_e^2}{W_e V_e}\Phi-\frac{U_e^3}{W_eV_e^2}\Psi-\frac{U_e^3}{W_e^2V_e}\Xi,\label{sec4_small_3EP1}\\
    0&=D_v \Psi_{yy}-\delta_1 b\Psi+2U_e\Phi,\label{sec4_small_3EP2}\\
	\tau \lambda\Xi &=\frac{\delta_2}{\delta_1}\Xi_{yy}-c\Xi+\Phi\label{sec4_small_3EP3}.
	\end{align}
\end{subequations} 
Since $\frac{\delta_2}{\delta_1}\ll 1$, we get from (\ref{sec4_small_3EP3}) that $\Xi=\frac{\Phi}{c+\tau\lambda}$, then (\ref{sec4_small_3EP}) reduces to 
\begin{subequations}\label{sec4_small_2EP}
	\begin{align}
    \lambda \Phi &=  \Phi_{yy} -\Phi+ \frac{3U_e^2}{W_e V_e}\Phi-\frac{U_e^3}{W_eV_e^2}\Psi-\frac{U_e^3}{W_e^2V_e}\frac{\Phi}{c+\tau \lambda},\label{sec4_small_2EP1}\\
    0&=D_v \Psi_{yy}-\delta_1 b\Psi+2\delta_1 U_e\Phi,\label{sec4_small_2EP2}
	\end{align}
\end{subequations} 
We expand 
\begin{equation}\label{sec4_expand1}
    \lambda = \delta_1 \lambda+0 + \cdots,\quad \Phi(y)=\Phi_0(y)+\sqrt{\delta_1}\Phi_1(y)+ \cdots, \Psi(y)=\sqrt{\delta_1}\Psi_0(y)+ \cdots, 
\end{equation}
and 
\begin{equation} \label{sec4_expand2}
    U_e(y)=U_{e0}+\sqrt{\delta_1} U_{e1}+\cdots, V_e(y)=V_{e0}+\sqrt{\delta_1} V_{e1}+\cdots, 
    W_e(y)=W_{e0}+\sqrt{\delta_1} W_{e1}+\cdots.
\end{equation}
Substituting (\ref{sec4_expand1}) and (\ref{sec4_expand2}) into (\ref{sec4_small_2EP}) and collecting $\mathcal{O}(1)$ terms,  we get $\Phi_0$ satisfies 
\begin{equation} \label{sec4_Phi_eqn}
    \Phi_{0yy}-\Phi_0+2w_c\Phi_0=0.
\end{equation}
The solution to (\ref{sec4_Phi_eqn}) is
\begin{equation}\label{sec4_Phi0}
    \Phi_0=w_{cy}, \quad \text{where} \quad w_c=\frac{3}{2}\text{sech}^2\left(\frac{y}{2}\right).
\end{equation}

Now collecting the next-order $\mathcal{O}(\varepsilon)$ terms of (\ref{sec4_small_2EP}), and using the fact $\frac{U_{e0}}{V_{e0}}=\frac{w_c}{c}$,  we obtain
\begin{subequations}\label{Phi1_eqn}
	\begin{align}
    \Phi_{1yy} -\Phi_1+ 2w_c\Phi_1&= \frac{w_c^2}{c}\Phi_0+(k_2-k_1)\Phi_0,\label{Phi1_eqn1}\\
    \Psi_{0yy}&=-\frac{2V_0}{cD_v}w_c(y)w_{cy}(y)\label{Phi1_eqn2},
	\end{align}
\end{subequations} 
where $k_1, k_2$ are the $\mathcal{O}(\sqrt{\delta_1})$ coefficients of the two terms $\frac{3U_e^2}{W_e^2 V_e}$ and $\frac{U_e^3}{W_e V_e}$, separately.  Recall the fact that $\mathcal{L}_0w_c=w_c''-w_c+2w_c^2=w_c^2$, so $\Phi_1$ could be written as
\begin{equation}\label{sec4_Phi1}   \Phi_1=\Phi_0(0)w_c+\Phi_{1,\text{odd}},
\end{equation}
where $\Phi_{1,\text{odd}}$ is some odd function.

Next, multiply (\ref{sec4_small_2EP}) by $U_y$ and integrate over $(-\infty, \infty)$, we obtain by integration by parts that
\begin{equation}\label{sec4_int}
   \lambda \int\Phi U_{ey} \left(1-w_c\frac{\tau}{c+\tau \lambda}\right) dy=\int \Phi \left( U_{eyyy}- U_{ey}+2w_c U_{ey}\right) dy-\int\frac{w_c^2}{c}\Psi  U_{ey}dy.
\end{equation}
Here the shorthand notation $\int f dy$ denotes $\int_{-\infty}^{\infty} f(y) dy$.  Moreover, using the fact that $U_e$ satisfies (\ref{inner_1}),  we reduce (\ref{sec4_int}) to the following small eigenvalue problem
\begin{equation}\label{sec4_int2}
   \lambda \int\Phi U_{ey} \left(1-\frac{\tau}{c+\tau\lambda}w_c\right) dy=\int \frac{w_c^2}{c}\Phi V_{ey} dy-\int\frac{w_c^2}{c}\Psi  U_{ey}dy.
\end{equation}

Now, to continue to reduce (\ref{sec4_int2}), we estimate the
left-hand side of (\ref{sec4_int2}) as
\begin{align}\label{sec4_LHS1}
\text{LHS}&=\lambda\frac{V_0}{c\sqrt{\delta_1}}\left(\int w_{cy}^2 dy-\frac{\tau}{c+\tau\lambda}\int w_cw_{cy}^2 dy\right),
\end{align}
and using the leading-order term $\Phi\sim \Phi_0=w_{cy}$, the right-hand side of (\ref{sec4_int2}) becomes
\begin{align}\label{sec4_RHS1}
    \text{RHS}&=\frac{1}{c}\int w_c^2w_{cy}\left(V_{ey}-\frac{1}{c}\frac{V_0}{\sqrt{\delta_1}}\Psi\right) dy \nonumber\\
    &=-\frac{1}{c}\int \frac{ w_c^3}{3}\left(V_{eyy}-\frac{1}{c}\frac{V_0}{\sqrt{\delta_1}}\Psi_y\right) dy,
\end{align}
where from (\ref{inner_2}) $V_{eyy}$ can be expressed as
\begin{equation}\label{sec4_v_eyy}
    V_{eyy}=\frac{\delta_1bV_e-\delta_1U_e^2}{D_v}.
\end{equation}
To estimate the term $\int\frac{w_c^3}{3}\Psi_y$ in (\ref{sec4_RHS1}), we define $F := \int_0^y\frac{w_c^3(s)}{3}ds$ and write
\begin{equation}\label{sec4_rhsterm1}
    \int\frac{w_c^3}{3}\Psi_y=\int\Psi_ydF=\Psi_yF\vert_{-\infty}^\infty-\int F(y)\Psi_{yy}.
\end{equation}
Note that from (\ref{sec4_small_3EP2}), $\Psi_{yy}$ can be approximated as
\begin{equation}\label{sec4_Psi_yy}
    \Psi_{yy}\sim -2\frac{\sqrt{\delta_1V_0}} {cD_v}w_c\Phi_0\nonumber=-2\frac{\sqrt{\delta_1V_0}} {cD_v}w_cw_{cy},
\end{equation}
then applying integration by parts yields
\begin{align}\label{sec4_rhsterm2}
    \int F(y)\Psi_{yy}
    &=-2\int F(y)\frac{\sqrt{\delta_1}V_0} {cD_v}w_cw_{cy}\nonumber\\
    &=\frac{\sqrt{\delta_1}V_0}{cD_v}\int \frac{w_c^5}{3}dy.
\end{align}
Substituting (\ref{sec4_rhsterm2}) into (\ref{sec4_rhsterm1}), we obtain
\begin{equation}\label{sec4_rhsterm3}
    \ int\frac{w_c^3}{3}\Psi_y= 
 \int\frac{w_c^3}{3}\left(\langle \Psi_y \rangle	- \frac{\sqrt{\delta_1}V_0}{cD_v}w_c^2\right)  dy,
\end{equation}
where $\langle \Psi_y \rangle$ is defined as $\langle \Psi_y \rangle :=
 \frac{\Psi_y(\infty)+\Psi_y(-\infty)}{2}$.
Now substituting (\ref{sec4_v_eyy}) and (\ref{sec4_rhsterm3}) into (\ref{sec4_RHS1}), the right-hand side of (\ref{sec4_int2}) simplifies to
\begin{equation}
    \text{RHS}=-\frac{1}{c}\int \frac{w_c^3}{3}dy\left(\frac{\sqrt{\delta_1}b}{D_v}V_0-\frac{1}{c}\frac{V_0}{\sqrt{\delta_1}}\langle \Psi_y \rangle\right).
\end{equation}
Therefore, the small eigenvalue problem (\ref{sec4_int2}) reduces to
\begin{equation}\label{sec4_int3}
    \lambda\left(\int w_{cy}^2 dy-\frac{\tau}{c+\tau\lambda}\int w_cw_{cy}^2 dy\right)=\int \frac{w_c^3}{3}dy\left(\frac{1}{c}\langle \Psi_y \rangle-\frac{\delta_1b}{D_v}\right),
\end{equation}
in which the only term to determine is $\langle \Psi_y \rangle :=
 \frac{\Psi_y(\infty)+\Psi_y(-\infty)}{2}$. To evaluate this term, we consider the outer problem (\ref{sec4_small_2EP2}) for $\Psi$. 
 
 We have shown from (\ref{sec4_Phi0}) and (\ref{sec4_Phi1})  that 
 \begin{equation}
     \phi(x)=\Phi(y)\sim \Phi_0+\sqrt{\delta_1}\Phi_1=w_{cy}+\sqrt{\delta_1}\Psi_0(0)w_c+\sqrt{\delta_1}\Phi_{1,odd}.
 \end{equation}
 Note that $w_{cy}$ is like a dipole (odd) and $w_c$ behaves like a Dirac delta function, so taking $\sqrt{\delta_1}\ll \delta\ll 1$ and integrating (\ref{sec4_small_2EP2}) on $(-\delta,\delta)$ yields the following condition
\begin{equation}\label{sec4_outer_conditon1}
    \psi_x(\delta)- \psi_x(-\delta)\sim -\frac{2}{D_v} \int_{-\delta}^\delta U_e\phi dx\sim-\sqrt{\delta_1}\frac{2V_0}{cD_v}\Psi(0)\int w_c^2 dy.
\end{equation}
 
To derive the second jump condition for $\psi$, we multiply (\ref{sec4_small_2EP2}) by $x$ and integrate over $(-\delta,\delta)$. Applying integration by parts yields
\begin{equation}\label{sec4_outer_conditon2}
\psi(\delta)-\psi(-\delta)=-\frac{\sqrt{\delta_1}V_0}{cD_v}\int w_c^2dy
\end{equation}
Therefore, the outer variable $\psi(x)$ satisfies:
\begin{subequations} \label{sec4_psi_out}
\begin{align}
    &D_v\psi_{xx}-b\psi=0, \quad x\neq 0, \\
    &\psi(0^+)-\psi(0^-)=-\frac{\sqrt{\delta_1}V_0}{cD_v}
    \int w_c^2 dy,\\
    &\psi_x(0^+)-\psi_x(0^-)=-\frac{2\sqrt{\delta_1}V_0}{cD_v}\Psi_0(0)
    \int w_c^2 dy.
\end{align}
\end{subequations} 
Since (\ref{sec4_Psi_yy}) implies that $\Psi_{yy}$ is an odd function, so $\Psi=\Psi(0)+\text{odd function}$ and 
\begin{equation}
    \Psi(0)=\frac{\Psi(\infty)+\Psi(-\infty)}{2}.
\end{equation}
By matching $\psi(x)=\Psi(y)=\sqrt{\delta_1}\Psi_0(y)$ and introducing $\psi(x)=\sqrt{\delta_1}\eta(x)$, we get  
\begin{subequations} \label{sec4_eta_out}
\begin{align}
    &D_v\eta_{xx}-b\eta=0, \quad x\neq 0, \\
    &\eta(0^+)-\eta(0^-)=-\frac{V_{0}}{cD_v}
    \int w_c^2 dy,\\
    &\eta_x(0^+)-\eta_x(0^-)=-\frac{2V_0}{cD_v}\langle \eta\rangle
    \int w_c^2 dy.
\end{align}
\end{subequations} 
and the inner problem (\ref{sec4_int3}) written in terms of $\eta$ as
\begin{equation}\label{sec4_int4}
    \lambda\left(\int w_{cy}^2 dy-\frac{\tau}{c+\tau\lambda}\int w_cw_{cy}^2 dy\right)=\delta_1\int \frac{w_c^3}{3}dy\left(\frac{1}{c}\langle \eta_x \rangle-\frac{b}{D_v}\right),
\end{equation}
where $\langle \eta_x \rangle:=\frac{\eta_x(0^-)+\eta_x(0^+)}{2}$.  Now to solve $\eta$ from (\ref{sec4_int4}) we specify the boundary conditions of $\eta$ for single spike equilibrium as $\eta'(\pm L)=0$ and $\eta$ is odd. Therefore, $\eta_x(0^+)-\eta_x(0^-)=0$ and solving (\ref{sec4_eta_out}) yields
\begin{equation}\label{sec4_eta}
\eta(x)=-\frac{V_0}{cD_v}\frac{\int w_c^2 dy}{2\cosh\left(\sqrt{\frac{b}{D_v}}l\right)}\left\{
\begin{array}
[c]{ll}%
\cosh\left(\sqrt{\frac{b}{D_v}}(x+L)\right), & -l<x<0 \\
-\cosh\left(\sqrt{\frac{b}{D_v}}(x-L)\right), & 0<x<l.  \\
\end{array}
\right.
\end{equation}
We then calculate $\langle\eta_x \rangle=\frac{\eta_x(0^-)+\eta_x(0^+)}{2}=\frac{V_0}{2cD_v}\sqrt{\frac{b}{D_v}}\int w_c^2 dy\tanh\left(\sqrt{\frac{b}{D_v}}l\right)$. With $V_0=V_{0+}=\frac{\sqrt{bD_v}c^2}{3}\tanh\left(\sqrt{\frac{b}{D_v}}l\right)$, we obtain
\begin{equation}\label{sec4_eta_x}
    \langle\eta_x \rangle=\frac{bc}{6D_v}\int w_c^2 dy\tanh^2\left(\sqrt{\frac{b}{D_v}}l\right).
\end{equation}
Finally, plug (\ref{sec4_eta_x}) into the eigenvalue problem (\ref{sec4_int4}) and using the fact that $\int w_c^2 dy=6$, we get
\begin{equation}\label{sec4_int5}
\lambda\left(\int w_{cy}^2 dy-\frac{\tau}{c+\tau\lambda}\int w_cw_{cy}^2 dy\right)=-\delta_1\int \frac{w_c^3}{3}dy\frac{b}{D_v}\text{sech}^2\left(\sqrt{\frac{b}{D_v}}l\right)
\end{equation}

 As $\lambda=\mathcal{O}(\delta_1)\ll 1$, we approximate $\frac{\tau}{c+\tau \lambda}\sim \frac{\tau}{c} +\mathcal{O}(\delta_1)$ to get the leading order of $\lambda$
\begin{equation}\label{sec4_small_eig1}
    \lambda\sim\frac{-\delta_1\int \frac{w_c^3}{3}dy\frac{b}{D_v}\text{sech}^2\left(\sqrt{\frac{b}{D_v}}l\right)}{\int w_{cy}^2 dy-\frac{\tau}{c}\int w_cw_{cy}^2 dy}.
\end{equation}
It is obvious that the top of (\ref{sec4_small_eig1}) is always negative, therefore, using the fact that $\int w_{cy}^2 dy=\frac{6}{5}$ and $\int w_cw_{cy}^2 dy=\frac{36}{35}$, $\lambda$ crosses $0$ whenever
\begin{equation}\label{sec4_tau_h}
    \tau>\tau_h=\frac{c\int w_{cy}^2 dy}{\int w_cw_{cy}^2 dy}=\frac{7}{6}c.
\end{equation}

To obtain the full expression of $\lambda$, we rewrite (\ref{sec4_int5}) as the following quadratic equation
\begin{equation}\label{sec4_quadratic_lambda}
    \frac{7}{6}\tau\lambda^2-\left(\tau-\frac{7}{6}c-\frac{35}{36}\delta_1\tau k\right)\lambda+\frac{35}{36}\delta_1kc=0,
\end{equation}
where $k=\int \frac{w_c^3}{3}dy\frac{b}{D_v}\text{sech}^2\left(\sqrt{\frac{b}{D_v}}l\right)$. 

This implies that as $\tau$ increases over $\tau_h$, a pair of complex conjugate eigenvalues enter the unstable right half-plane, triggering an oscillatory instability in the motion of the spike, with
\begin{equation}\label{sec4_real_eig}
    Re(\lambda_{\pm})=\frac{\tau-\frac{7}{6}c-\frac{35}{36}\delta_1\tau k}{\frac{7}{3}\tau}\sim\frac{3}{7\tau}\left(\tau-\frac{7}{6}c\right),
\end{equation}
and \begin{equation}\label{sec4_imag_eig}
    Im(\lambda_{\pm})=\pm \sqrt{\delta_1\frac{5}{6}\frac{kc}{\tau}}i.
\end{equation}
We now summarize the discussion in the following. 
\begin{thm}\label{small_eig_threshold1}
    In the case $\tau>0, \theta=0$, the single-spike equilibrium of the system (\ref{sec3_Gm}) loses stability and undergoes a Hopf bifurcation as $\tau$ increases beyond $\tau_h\sim \frac{7}{6}c$.  Moreover, As $\tau\to \infty$, the imaginary part of the eigenvalue $Im(\lambda)\to 0$. 
\end{thm}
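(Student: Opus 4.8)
The plan is to reduce the statement to an elementary root-location analysis of the quadratic relation (\ref{sec4_quadratic_lambda}), which already encodes the full matched-asymptotics reduction of the small-eigenvalue problem carried out above, supplemented by a transversality computation and a control on the $O(\delta_1)$ truncations.

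First I would record the signs of the coefficients. Writing (\ref{sec4_quadratic_lambda}) as $A\lambda^2 - B\lambda + C = 0$ with $A = \tfrac{7}{6}\tau$, $B = \tau - \tfrac{7}{6}c - \tfrac{35}{36}\delta_1\tau k$ and $C = \tfrac{35}{36}\delta_1 kc$, note that $A>0$ and $C>0$ for all $\tau>0$, since $k>0$ follows at once from its definition as a positive integral times $b/D_v$ times $\mathrm{sech}^2$. Hence the product of the two roots $\lambda_+\lambda_- = C/A$ is positive and $O(\delta_1)$, while their sum $\lambda_+ + \lambda_- = B/A$ vanishes precisely at
\[
\tau_h := \frac{\tfrac{7}{6}c}{1 - \tfrac{35}{36}\delta_1 k} = \tfrac{7}{6}c\,\bigl(1 + O(\delta_1)\bigr).
\]
Combining these two facts: for $\tau<\tau_h$ one has $B<0$, so the two roots lie in the open left half-plane (either two negative reals or a conjugate pair with negative real part), while for $\tau>\tau_h$ one has $B>0$ and both roots move into the right half-plane. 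Therefore the single-spike equilibrium loses stability exactly as $\tau$ crosses $\tau_h\sim\tfrac{7}{6}c$.

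Second I would check that this is a Hopf bifurcation. Near $\tau=\tau_h$ the coefficient $B$ is $O(\delta_1)$, so the discriminant $B^2-4AC$ is an $O(\delta_1^2)$ term minus a strictly positive $O(\delta_1)$ term, hence negative for $\delta_1$ small; the crossing eigenvalues are thus a genuine complex-conjugate pair with nonzero imaginary part, given to leading order by (\ref{sec4_imag_eig}). For transversality I would use (\ref{sec4_real_eig}), which rearranges to $\mathrm{Re}(\lambda_\pm) = \tfrac{3}{7}\bigl(1 - \tfrac{35}{36}\delta_1 k\bigr) - c/(2\tau)$, whence $\tfrac{d}{d\tau}\mathrm{Re}(\lambda_\pm) = c/(2\tau^2) > 0$, so the pair crosses the imaginary axis at $\tau_h$ with positive speed. (One should also note that the translational mode supplies the only relevant small eigenvalues for a single spike, and that the large-eigenvalue Hopf threshold from the Case~3 analysis of Section~\ref{Sec3} exceeds $\tau_h$, so no other eigenvalue has crossed for $\tau<\tau_h$.) Conjugate-pair crossing, transversality, and simplicity of the crossing together give the Hopf bifurcation. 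For the ``moreover'' part, within the $\tau$-range where the conjugate pair persists one has $\mathrm{Im}(\lambda_\pm)^2 = (4AC - B^2)/(2A)^2 = O(\delta_1/\tau) \to 0$ as $\tau\to\infty$, matching (\ref{sec4_imag_eig}); and for fixed $\delta_1$ the discriminant eventually turns positive, beyond which $\mathrm{Im}(\lambda)=0$ identically, so in either case $\mathrm{Im}(\lambda)\to0$.

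The main obstacle is not this final bookkeeping but justifying the $O(\delta_1)$ truncations used upstream to pass from (\ref{sec4_small_2EP}) to (\ref{sec4_int5}) — in particular replacing $\tau/(c+\tau\lambda)$ by $\tau/c$ when $\lambda=O(\delta_1)$, and discarding the $\Phi_{1,\mathrm{odd}}$ contribution and the $O(\sqrt{\delta_1})$ corrections to $U_e,V_e,W_e$ in the solvability relation. One needs to check that all such corrections enter the quadratic only at relative order $O(\delta_1)$ against the retained terms, so that $\tau_h = \tfrac{7}{6}c + O(\delta_1)$ and the sign of $\mathrm{Re}(\lambda_\pm)$ on either side of $\tau_h$ are unaffected. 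With that estimate in hand, the quadratic root-location and transversality arguments above complete the proof.
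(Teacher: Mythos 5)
Your proposal is correct and follows the same route as the paper: the theorem is read off the quadratic relation (\ref{sec4_quadratic_lambda}) for the small eigenvalue, with $\tau_h$ identified as the zero of the linear coefficient and $\mathrm{Im}(\lambda_\pm)$ read from (\ref{sec4_imag_eig}). Your treatment is somewhat more careful than the paper's — explicitly checking $A,C>0$, the negativity of the discriminant near $\tau_h$, the transversality $\tfrac{d}{d\tau}\mathrm{Re}(\lambda_\pm)=c/(2\tau^2)>0$, and the absence of prior crossings — but these are refinements filling in steps the paper leaves implicit, not a genuinely different argument.
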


In Figure \ref{fig:small_eig1}, we compare asymptotic predictions $\tau_h$ with full numerical simulations of (\ref{sec3_Gm}).In the simulations, we record the critical value of $\tau$, beyond which the spike begins to oscillate periodically around the center of the domain. In terms of $c$, $\tau_h$ varies linearly and shows excellent agreement between analysis and simulations.  Moreover, as shown in the Introduction \ref{sec1}, Figure \ref{fig:seig_plot} presents full numerical simulations with FlexPDE illustrating the spike dynamics for values of $\tau$ exceeding the Hopf threshold $\tau_h$. For $\tau$ slightly larger than $\tau_h$, the interior spike undergoes small-amplitude oscillations around its equilibrium location, which implies the onset of Hopf instability. As $\tau$ is increased further, these oscillations grow in amplitude, and the spike begins to drift significantly. In this regime, the motion eventually drives the spike toward the boundary, indicating a transition from  oscillatory dynamics to drift-dominated motion, which is consistent with the decrease of the imaginary part of the eigenvalue as $\tau$ increases.

\begin{figure}[htbp]
    \centering   \includegraphics[width=0.6\textwidth]{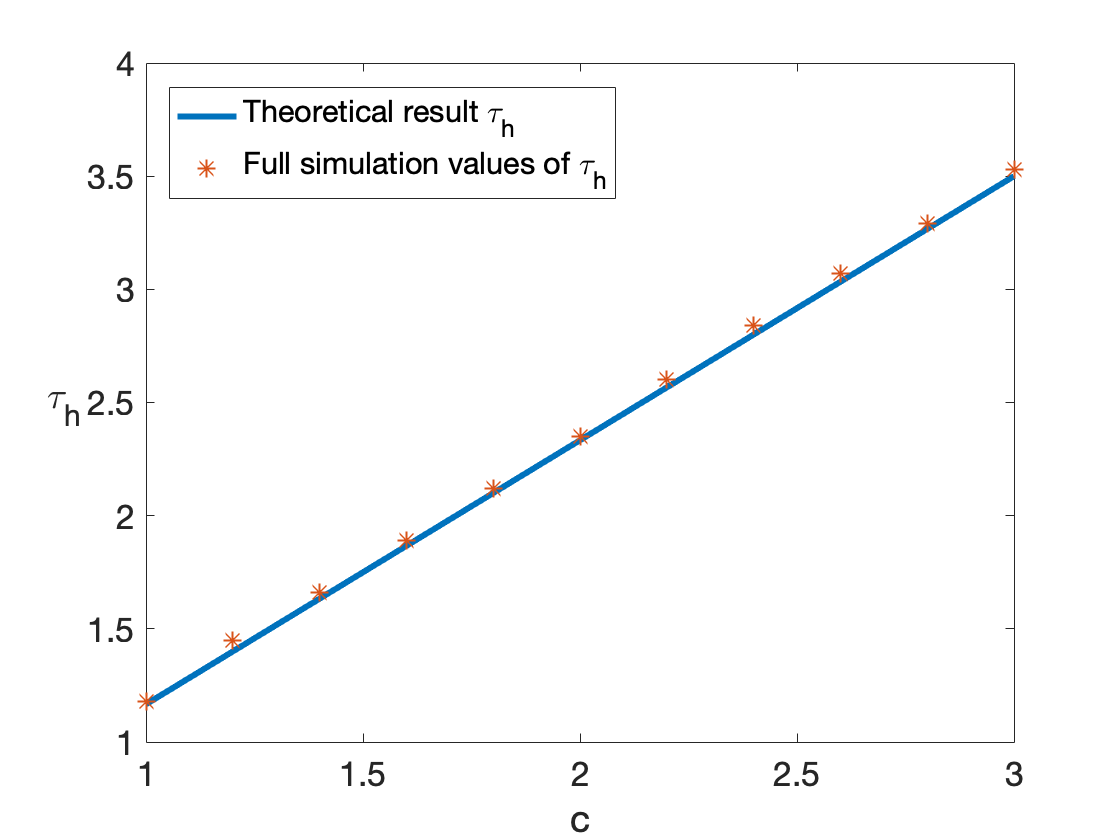}
    \caption{Comparison between asymptotic and numerical results for $\tau_h$ as parameter $c$ is varied. The solid curve is the asymptotic result given in \eqref{sec4_tau_h}. The stars are obtained by full simulations of the GM model \eqref{sec3_Gm} using \textit{Flexpde}. Parameters: $D_v=1, \delta_1 = 0.01^2, a=0.01,  b=1$ and $l = 1$.}
    \label{fig:small_eig1}%
\end{figure}

\section{Discussion}  \label{Sec5}

In this paper, we present an extension of the GM model (\ref{Gm}) in the semi-strong interaction regime, also characterized by an asymptotically large diffusivity ratio. Within this framework, we have constructed a single spike equilibrium for an arbitrary value of $a>0$.  From a mathematical perspective, the novelty of the analysis in contrast to previous studies \cite{alsaadi_2022, gai2020localized} is that we now must couple a nonlinear inner problem for the spike profile to a nonlinear reduced scalar boundary value problem (BVP) defined in the outer region away from the spike. This coupling leads to a more complex inner-outer interaction, but it applies to all values of $a>0$.

For the non-trivial background state $a>0$, we analyzed a global bifurcation mechanism that is responsible for the generation of spatial patterns as the inhibitor diffusivity $D_v$ decreases.  In parameter regimes where a one-spike solution exists on the infinite line, in Section \ref{Sec2: no nucleation} we showed that spike nucleation will not occur as $D_v$ decreases in the semi-strong interaction regime.  

By introducing the time-scaling parameter $\theta$ and $\tau$ in front of the equations for $v$ and $w$, respectively, we studied the novel behavior introduced by the third component $w$. The dynamics now exhibits not only large-scale oscillatory motion in the amplitude, which is triggered by large-eigenvalue instabilities, but also oscillatory spike motion associated with small-eigenvalues crossing into the right half-plane.  These two mechanisms highlight a key difference from classical two-component RD systems \cite{ward2003hopf, alsaadi_2022, gai2023police}, as well as some three-component framework \cite{al2024localized}, where only large-scale oscillatory motion in the amplitude is observed. As a result, the extended model supports a richer variety of oscillatory dynamics.

There are numerous open questions for future study.  In Section \ref{Sec3} we have derived a novel nonlocal eigenvalue problem (NLEP) due to the presence of the $\tau$-dependent term, it would be interesting to provide a rigorous study for the spectrum of NLEP.  Previous work on NLEPs with eigenvalue dependence has focused primarily on cases where the eigenvalue enters the non-local term rather than the operator itself \cite{ward2003hopf}. A systematic investigation of the spectrum in the present three-component setting could reveal new bifurcation structures.

In this paper, we have studied the effects of $\theta$ and $\tau$ separately under the regime $a\ll 1$.  It would be natural to extend the stability analysis to the case $a=\mathcal{O}(1)$.  Preliminary simulations suggest that this setting gives rise to a rich variety of spike dynamics that deserve further study, including spike motion, spike nucleation, and spike competition leading to spike death.  It would therefore be interesting to investigate the detailed spike dynamics in this regime, as well as the possible interplay with multi-spike patterns.

\begin{figure}
\begin{center}
\includegraphics[width=0.48\textwidth]{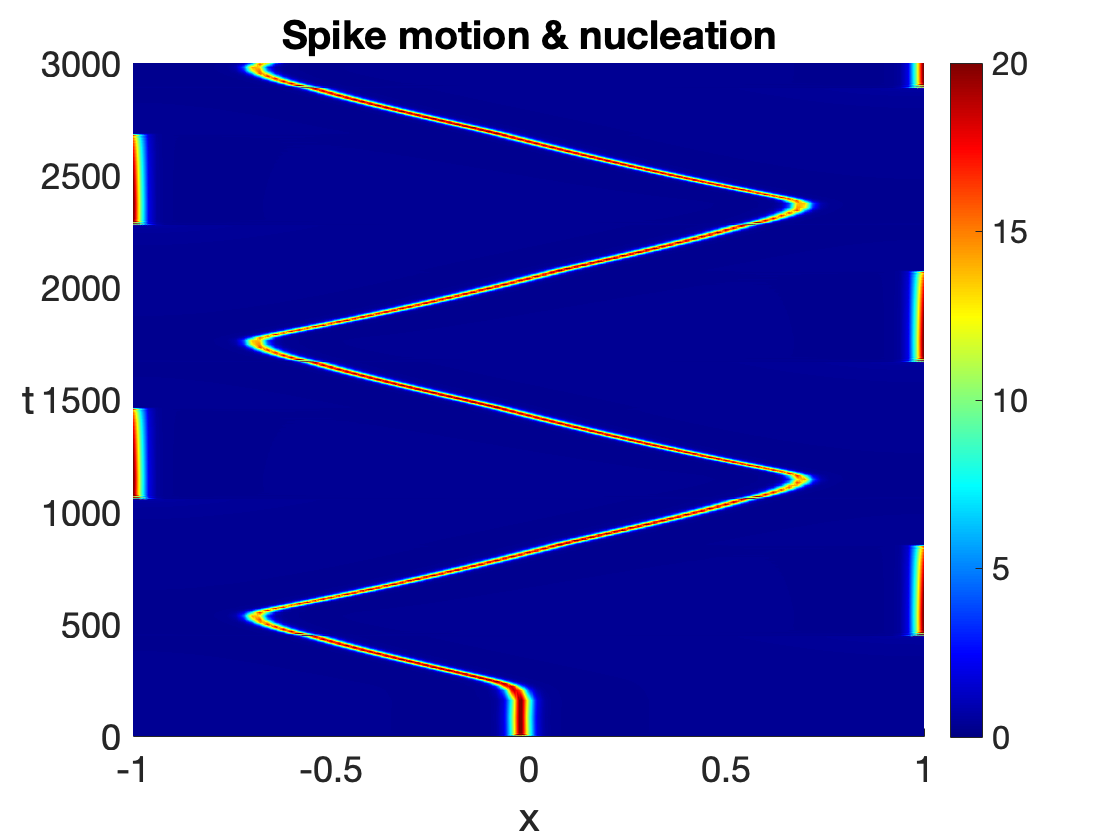} \quad
\includegraphics[width=0.48\textwidth]{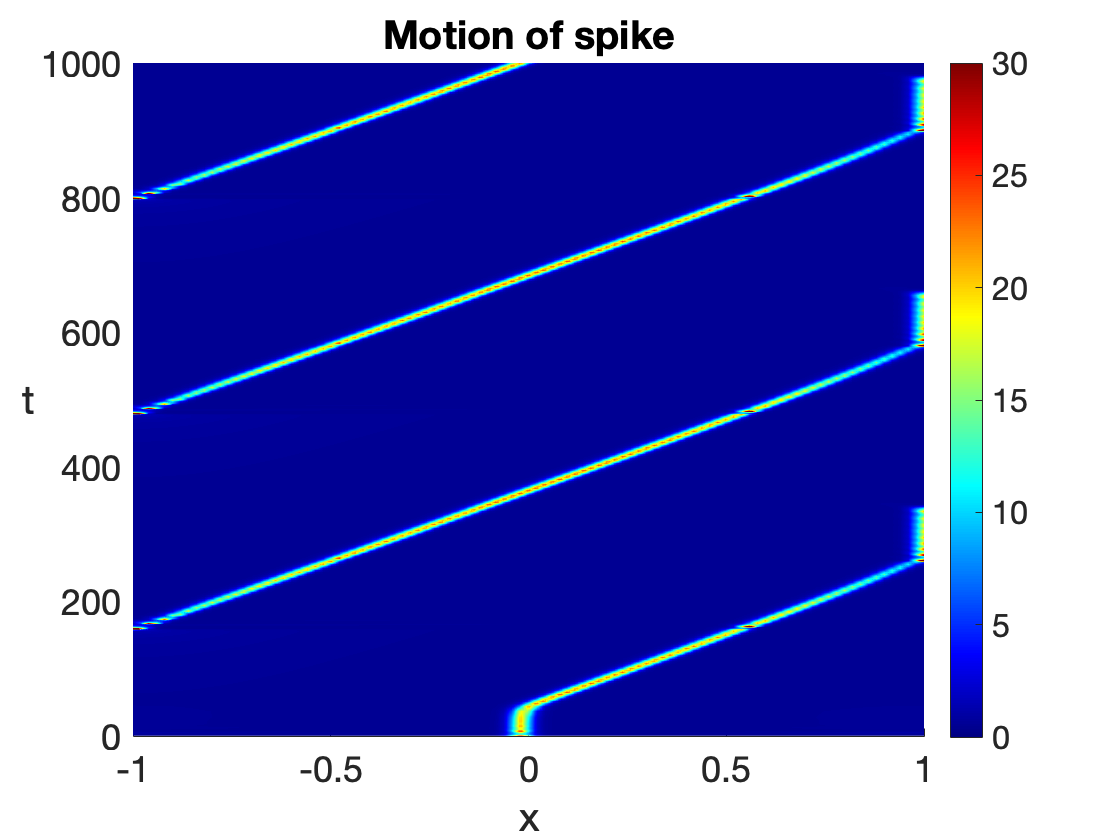} \newline
(a) ~~~~~~~~~~~~~~~~~~~~~~~~~~~~~~~~~~~~~~~~~~~~~~~~~~~~~~~~~~~~ (b)
\caption{Full simulations by {Flexpde} \cite{flexpde2015} illustrating spike motions for different values of $\tau$ triggering spike nucleation and annihilation dynamics; 
(a)For $\tau=1.25$, the interior spike exhibits oscillations that eventually induce boundary annihilation and nucleation on the opposite side. (b) $\tau=1.5$, the spike drifts toward the boundary, triggering nucleation at the opposite boundary and subsequent spike competition. Other parameters are: $\delta_1=0.01^2, a=0.3, b=1, c=1, l=1,\theta=0, D_v=0.2.$}
\label{fig:open1}
\end{center}
\end{figure}

An example is shown in Figure \ref{fig:open1} for $a>0$, and $\tau$ is sufficiently large. In the left panel, the spike motion leads to the nucleation as the spike radius near the far boundary becomes large enough. However, as the spike oscillates back, this motion induces competition: the boundary spike is annihilated, while a new spike is nucleated on the opposite side. In the right panel, when $\tau$ is increased further, the interior spike no longer oscillates but instead drifts toward the boundary, and triggers nucleation at the opposite boundary. This newly generated boundary spike then transitions into an interior spike, which in turn annihilates the previously existing interior spike once it becomes a boundary.


\section*{Acknowledgments}
Chunyi Gai gratefully acknowledges the support of the NSERC Discovery Grant Program.
The work of Fahad Al Saadi is funded by the Ministry of Higher Education, Research, and Innovation (MoHERI) under the Block Funding Program and conducted with support from the Military Technological College (MTC), Oman.

\appendix
\section*{Appendix: The Local Eigenvalue Problem \texorpdfstring{$L_\lambda\phi=\lambda\phi$}{Llambda phi = lambda phi}} \label{app}
\addcontentsline{toc}{section}{Appendix} 

\renewcommand{\theequation}{\arabic{equation}}
\renewcommand{\theequation}{A.\arabic{equation}}
\setcounter{equation}{0}

In this appendix, we solve the local eigenvalue problem (\ref{sec3_L_lambda_eig}) as follows.
\begin{equation}\label{app_local_problem}
    L_\lambda \phi:= \phi_{yy}-\phi+\left(2+\frac{\tau\lambda}{c+\tau\lambda}\right)w_c\phi=\lambda \phi,
\end{equation}
where $w_c=\frac{3}{2}\text{sech}^2(\frac{y}{2}).$
We are interested in finding any positive solution $\lambda>0$.
Let $z=\frac{x}{2}$, then (\ref{app_local_problem}) becomes
\begin{equation}\label{app_local_scaled}
\phi_{zz}+\left[\left(12+6\frac{\tau\lambda}{c+\tau\lambda}\right)\text{sech}^2(z)-4(1+\lambda)\right]\phi=0,
\end{equation}
which is of the well-known P\"oschl Teller type, and has known exact eigenvalues\cite{flugge2012practical}. In particular, in the canonical form
\begin{equation}\label{app_Poschl--Teller }
    \phi_{zz}+\left(p(p+1)\text{sech}^2(\frac{y}{2})-k^2\right)\phi=0, \quad v>0.
\end{equation}
The parameter $p$ controls the strength of the potential bump, and there are $[p]$ localized eigenfunctions (here $[p]$ denotes the floor of $p$), which can be expressed in terms of associated Legendre or hypergeometric functions. Their discrete levels are
\begin{equation}\label{app_kn}
    k_n^2=(v-n)^2, \quad n=0,1,..[v]-1.
\end{equation}
Now we match the parameters in our local eigenvalue problem (\ref{app_local_problem}), which yields the following system.
\begin{equation}
    \Psi(0)=\frac{\Psi(\infty)+\Psi(-\infty)}{2}.
\end{equation}
By matching $\psi(x)=\Psi(y)=\sqrt{\delta_1}\Psi_0(y)$ and introducing $\psi(x)=\sqrt{\delta_1}\eta(x)$, we get  
\begin{subequations} \label{app_match}
\begin{align}
   v(v+1)&=12+6\frac{\tau\lambda}{c+\tau\lambda}, \\
    k^2&=4(1+\lambda),\\
    k_n^2&=(v-n)^2.
\end{align}
\end{subequations} 
Solving the system (\ref{app_match}), we find that $\lambda$ satisfies 
\begin{equation}\label{app_lambda_eqn}
    F(\lambda):= \frac{\left(p(\lambda)-n\right)^2}{4}-1-\lambda=0,\quad n=0,1,..[p]-1,
\end{equation}
where
\begin{equation}\label{app_p(lambda)}
    p(\lambda)=\frac{-1+\sqrt{1+4\left(12+6\frac {\tau\lambda}
    {\tau\lambda+c}\right)}}{2}.
\end{equation}
\ref{app_lambda_eqn} can be further reduced to the following equation,
\begin{equation}\label{app_lambda_eqn2}
    12+6\frac {\tau\lambda}
    {\tau\lambda+c}=n^2+n+4+4\lambda+\sqrt{n^4-n^2+(8+8\lambda)\left(2n^2+2n+\frac{1}{2}\right)}.
\end{equation}
As we are only interested in positive solutions and $\lambda$ decreases with the mode index $n$. In fact, for $n=1$, (\ref{app_lambda_eqn2}) reduces to 
\begin{equation}
    1+\frac{\tau\lambda}{c+\tau\lambda}-\frac{2}{3}\lambda-\sqrt{1+\lambda}=0,
\end{equation}
which always admits the solution $\lambda=0$ for an arbitrary value of $c$ and $\tau$, so we seek a positive solution for the lowest mode $n=0$, which satisfies
\begin{equation}\label{app_thm3}
    4+3\frac{\tau\lambda}{c+\tau\lambda}-2\lambda-\sqrt{1+\lambda}=0.
\end{equation}
Moreover, as $\tau\to \infty$, equation (\ref{app_thm3}) reduces to 
\begin{equation}
    7-2\lambda-\sqrt{1+\lambda}=0,
\end{equation}
which has a solution $\lambda\sim 2.56$.


\bibliographystyle{plain} 
\bibliography{manuscript} 
\end{document}